\documentclass{article}[12pt]

\topmargin=-1,5cm
\textwidth=15cm
\textheight=22cm
\oddsidemargin=2,7cm
\evensidemargin=2,7cm

\hoffset=-2,5cm

\usepackage{amsmath}
\usepackage{amsfonts}
\usepackage{amsthm}
\usepackage{graphicx}
\usepackage[latin1]{inputenc}
\usepackage{color}

\newtheorem{remark}{Remark}[section]
\newtheorem{theorem}[remark] {Theorem}
\newtheorem{prop}[remark]{Proposition}
\newtheorem{lemma}[remark]{Lemma}
\newtheorem{cor}[remark]{Corollary}

\newcommand{\R}{\mathbb R}

\begin{document}
\large \title{Fluctuation Theorems for Synchronization
  of\\ Interacting P\'olya's urns} 

\author{Irene Crimaldi\footnote{IMT Institute for Advanced Studies
    Lucca, Piazza San Ponziano 6, I-55100 Lucca, Italy, e-mail:
    irene.crimaldi@imtlucca.it (corresponding author)}, Paolo Dai
  Pra\footnote{Dipartimento di Matematica, Universit\`a degli Studi di
    Padova, Via Trieste 63, I-35121 Padova, Italy, e-mail:
    daipra@math.unipd.it}, Ida Germana Minelli\footnote{Dipartimento
    di Ingegneria e Scienze dell'Informazione e Matematica,
    Universit\`a degli Studi dell'Aquila, Via Vetoio (Coppito 1),
    I-67100 Coppito (AQ), Italy, e-mail: ida.minelli@dm.univaq.it}}

\maketitle

\abstract{We consider a model of $N$ two-colors urns in which the
  reinforcement of each urn depends also on the content of all the
  other urns. This interaction is of mean-field type and it is tuned
  by a parameter $\alpha \in [0,1]$; in particular, for $\alpha = 0$
  the $N$ urns behave as $N$ independent P\'olya's urns. For $\alpha >
  0$ urns synchronize, in the sense that the fraction of balls of a
  given color converges a.s. to the same (random) limit in all urns.
  In this paper we study fluctuations around this synchronized
  regime. The scaling of these fluctuations depends on the parameter
  $\alpha$. In particular the standard scaling $t^{-1/2}$ appears only
  for $\alpha>1/2$. For $\alpha \geq 1/2$ we also determine the limit
  distribution of the rescaled fluctuations. We use the notion of
  stable convergence, which is stronger than convergence in
  distribution.  \\

\noindent{\it Keywords:} Central limit theorem, Fluctuation theorem,
Interacting system, Stable convergence, Synchronization, Urn model. \\

\noindent{\it 2010 Mathematics Subject Classification:} Primary 60K35; 60F05,
60G57, 60B10.
}

\section{Introduction}

In this paper we continue the study of synchronization for a model of
interacting P\'olya's urn that has been introduced in
\cite{daipra-2014}. This study is motivated by the attempt of
understanding the role of reinforcement in synchronization
phenomena. Here the word {\em synchronization} is meant in the wide
sense of ``coherent behavior of the majority'', that could be time
stationary or time periodic.  Experimental results in the context of
cellular and neuronal systems have stimulated the formulation and the
analysis of stylized stochastic models that could reveal the origin of
such phenomena, in particular in systems that are not time-reversible
(see e.g. \cite{GB, perth1}). The wide majority of the models proposed
consists of time-homogeneous Markov processes; with this choice,
long-time correlations and {\em aging} are usually ruled out.

One way of breaking time-homogeneity and adding memory to the dynamics
consists in introducing a reinforcement mechanism. The most popular
stylized model in this context is the P\'olya's urn model. In the
simplest version, the model consists of an urn which contains balls of
two different colors (for example, at time $t=0$, $a\geq 1$ red and
$b\geq 1$ black balls). At each discrete time a ball is drawn out and it
is replaced in the urn together with another ball of the same color.
Let $Z_t$ be the fraction of red balls at time $t$, namely, the
conditional probability of drawing a red ball at time $t+1$, given the
fraction of the red balls at time $t$.  A well known result (see for
instance~\cite{klenke2007probability} or \cite{mah-2008}) states that
$(Z_t)_t$ is a bounded martingale and in particular $\lim_{t\rightarrow
  \infty} Z_t=Z_\infty$ a.s., where $Z_\infty$ has Beta distribution
with parameters $a$ and $b$.

In \cite{daipra-2014} an interacting version of this model is
formulated. Consider a set of $N>1$ P\'olya's urns and introduce a
``mean field interaction'' among them:
\begin{itemize}
\item
at time $0$, each urn contains $a\geq 1$ red and $b\geq 1$ black balls;
\item
at each time $t+1$, a new ball is introduced in each urn and, given
the fraction $Z_t(i)$, for $1\leq i\leq N$, of red balls in each urn
$i$ at time $t$, the ball added in urn $j$ is, independently of what
happens for all the other urns, red (otherwise it is black) with
conditional probability $\alpha Z_t+(1-\alpha)Z_t(j)$, where $Z_t$
is the total fraction of red balls in the system at time $t$,
i.e. $Z_t=\frac{1}{N}\sum_{i=1}^N Z_t(i)$, and $\alpha\in [0,1]$.
\end{itemize}
 
The case $\alpha=0$ corresponds to $N$ independent copies of the
classical P\'olya's urn described above. Thus, for $1\leq j\leq N$,
each proportion $Z_t(j)$ converges, as $t \rightarrow +\infty$, to
i.i.d. random variables, whose distribution is Beta with parameters
$a,\, b$. As soon as $\alpha >0$, some basic properties of the
P\'olya's urn model is lost: in particular the sequences
$\big(Z_t(j)\big)_t$ are not martingales (although $(Z_t)_t$ is a
martingale), and the sequences of the colors drawn are no more
exchangeable. It is shown in \cite{daipra-2014}, that 
$D_t(j):=Z_t(j)-Z_t$ converges to zero almost surely and in $L^2$; as a
consequence, all the fractions $Z_t(j)$ converge a.s. to the {\em same
  limit} $Z$. We refer to this phenomenon as {\em almost sure
  synchronization} of the system of interacting urns. It is relevant
to note that this is not a macroscopic or {\em thermodynamic} effect:
the number of urns $N$ is kept fixed. The ``phase transition'' from
disorder ($\alpha = 0$) to synchronization ($\alpha >0$) is not a
consequence of the large scale of the system but of the long memory
caused by the reinforcement.

In the present paper we analyze in detail the {\em fluctuations} of
$D_t(j)= Z_t(j)-Z_t$ around zero as $t\to +\infty$. The rate of
convergence to zero in $L^2$-norm has been already analyzed in
\cite{daipra-2014}, revealing an interesting scaling for certain
values of the interaction parameter $\alpha$: $D_t(j)$ scales as
$t^{-1/2}$ for $\frac{1}{2}< \alpha \leq 1$, as $t^{-1/2}
\sqrt{\ln(t)}$ for $\alpha = \frac{1}{2}$, and as $t^{-\alpha}$ for
$0<\alpha<\frac{1}{2}$. In this paper, we obtain limit theorems for
the rescaled fluctuations: for $\alpha \geq \frac{1}{2}$, they
converge in distribution, as $t \to +\infty$, to a mixture of centered
Gaussian distributions, whose random variance is an explicit function
of the limit random variable $Z$; for $0<\alpha<\frac{1}{2}$, the
rescaled fluctuations converge {\em almost surely} to a nonzero real
random variable. Indeed, in the case $\alpha \geq \frac{1}{2}$, we
prove {\em stable convergence} (see e.g. \cite{cri-let-pra-2007}),
which is strictly stronger than convergence in distribution
(basic definitions and results on this form of convergence will be
recalled later on). 

We note that the scaling phenomenon in our model resembles the one
observed in a single Friedman's urn (see \cite{fr65,
  fr49}). Friedman's urn model is a modification of P\'olya's urn in
which, after each drawing, one adds in the urn $A>0$ balls of the
drawn color, and $B>0$ balls of the other color. In this model the
proportion $Z_t$ of red balls converges a.s. to $\frac{1}{2}$ and
fluctuations around this limit exhibit an analogous scaling depending
whether the parameter $\alpha:= 1-\frac{A-B}{A+B}$ is smaller or equal
or bigger than $1/2$. (However, we point out that the limit
distributions obtained by Friedman for $\alpha\geq 1/2$ are simple
Gaussian distributions and not mixtures of Gaussian distributions.)
The analogy can be explained by a similar equation for the conditional
probability of drawing a red ball at time $t+1$. Indeed, our arguments
for the proofs in this paper could be used to strengthen some of the
results in \cite{fr65}. We will not elaborate further on this point.

We also mention that central limit theorems for a single randomly
reinforced P\'olya's urn has been recently given in \cite{aletti-2009,
  ber-cri-pra-rig-2011, ber-cri-pra-rig-2010, crimaldi-2009}.
Finally, we remark that models of interacting urns have been
considered by various authors (see e.g. \cite{pemantle-2007} for a
general survey on random processes with reinforcement). However, they
are different from the one studied in this paper. In particular, in
\cite{marsili-1998}, the authors introduced a model which describes a
system of interacting agents, modeled by urns arranged on a lattice,
subject to perturbations and occasionally break down. Indeed, each urn
contains $b$ black and one white balls; at each time, a ball is drawn
from a certain urn and, if it is white, then a new white ball is added
in the urn, while, if it is black, then the urn comes back to the
initial composition and, for each white ball previously present in the
urn, a similar attempt to add a white ball is made on a randomly
chosen nearest neighbour urn.  In \cite{cirillo-2012}, the authors
also consider a system of interacting urns on a lattice, representing
agents subject to defaults. The reinforcement matrix is not only a
function of time (time contagion), but also of the behavior of the
neighboring urns (spatial contagion) and of a random component. In
\cite{paganoni}, a countable collection of interacting urns is
considered in which, at each time, a ball is sampled from each urn and
a random number of new balls of the same color of the extracted one is
introduced in the urn together with the drawn ball. The distribution
of the reinforcement for urn $j$ depends on the colors extracted from
the urns with index $i\neq j$ and on an independent random
factor. Urns with a mean-field interaction have been considered in
\cite{launay-2012, launay}, but with a reinforcement scheme different
from ours: their main results are proven when the probability of
drawing a ball of a certain color is proportional to the exponential
of the number of balls of that color, rather than to the number of
balls of that color, leading to a quite different synchronization
picture.

The paper is organized as follows. In Section~\ref{prelimin} we
formally introduce the model and recall the needed facts concerning
stable convergence. In Section~\ref{results} we give and discuss the
statement of our main results, whose proofs are postponed to
Section~\ref{proofs}. The paper is enriched with an appendix which
contains some useful auxiliary results.

\section{Setting, notation and preliminaries}
\label{prelimin}

We consider the interacting system introduced in
\cite{daipra-2014}. More precisely, we have the following system of
$N>1$ (with $N$ finite) P\'olya's urns on a probability space
$(\Omega, {\cal A}, P)$, with ``mean field interaction'':
\begin{itemize}
\item at time $t=0$, each urn contains $a\geq 1$ red balls and $b\geq
  1$ black balls;
\item at time $t+1$, a new ball is added in each urn as follows: given
  the proportion $Z_t(i)$, for $1\leq i\leq N$, of red balls in each
  urn at time $t$, the ball added in urn $j$ is, independently of
  what happens in all the urns with $i\neq j$, red (otherwise it is
  black) with conditional probability
\begin{equation}\label{rel-nostra}
\alpha Z_t + (1-\alpha)Z_t(j)
\end{equation}
where $Z_t=\frac{1}{N}\sum_{i=1}^N Z_t(i)$ is the total proportion
of red balls in the system at time $t$ and $\alpha$ is a parameter in
$[0,1]$ which tunes the interaction among the urns ($\alpha=0$
corresponds to $N$ independent P\'olya's urns and $\alpha=1$
corresponds to the admissible maximum level of interaction).
\end{itemize}

\indent For all the sequel, we set $m=a+b$ and $D_t(j)=(Z_t(j)-Z_t)$
(for simplicity, sometimes we omit the index $j$, i.e. we set
$D_t=D_t(j)$, when $j$ is fixed) and, for each $i=1,\dots, N$, we
denote by $I_{t+1}(i)$ the indicator function of the event $\{$red
ball for urn $i$ at time $t+1\}$\footnote{It corresponds to the
  random variable $Y_t(i)$ in \cite{daipra-2014}.} and we
define the (increasing) filtration ${\cal F}=({\cal F}_t)_t$ as 
$$
{\cal F}_0=\{\Omega, \emptyset\}\quad\hbox{and}\quad
{\cal F}_t=\sigma(I_k(i):\, i=1,\dots, N,\; 1\leq k\leq t).
$$

\indent It is easy to verify that $(Z_t)$ is an $\cal F$-martingale
which converges almost surely to a random variable $Z$. Furthermore,
in \cite{daipra-2014}, authors verified that each $(Z_t(j))$ is an
$\mathcal F$-quasi-martingale (a martingale when $\alpha=0$) and, for
$\alpha>0$, they proved the almost sure synchronization, i.e.
$$
Z_t(j)\stackrel{a.s.}\longrightarrow Z\qquad\forall j\in\{1,\dots,N\}.
$$ 

These last two properties are a consequence of the relation
\begin{equation}\label{rel-4}
E[D_t(j)^2]=E[(Z_t(j)-Z_t)^2]\sim d(\alpha)
\begin{cases}
t^{-2\alpha}\quad&\mbox{when } 0<\alpha<1/2\\
t^{-1}\ln(t) \quad&\mbox{when } \alpha=1/2\\
t^{-1} \quad&\mbox{when } 1/2<\alpha\leq 1\\
\end{cases}
\end{equation}
which holds true for each fixed urn $j$ and $t\to
+\infty$\footnote{The symbol $d(\alpha)$ denotes a constant in
  $(0,+\infty)$ and we use the notation $a_t\sim b_t$ when $\lim_{t\to
    +\infty} a_t/b_t=1$.}.\\

\indent We finally recall some simple relations for a fixed $j$, that
will be useful in the following proofs:
$$
Z_{t+1}(j)-Z_t(j)=\frac{I_{t+1}(j)-Z_t(j)}{m+t+1},\qquad
Z_{t+1}-Z_t=\frac{(\sum_{i=1}^N I_{t+1}(i)/N)-Z_t}{m+t+1},
$$ 
$$
E[I_{t+1}(j)\,|\,{\cal F}_t]=\alpha Z_t+(1-\alpha)Z_t(j),
\qquad
E\left[{\textstyle\sum_{i=1}^N I_{t+1}(i)/N}\,|\,{\cal F}_t\right]=Z_t,
$$
and
$$
E[Z_{t+1}(j)\,|\,{\cal F}_t]-Z_t(j)=\frac{\alpha}{m+t+1}(Z_t-Z_t(j))=
\frac{-\alpha D_t}{m+t+1}.
$$

\indent We conclude this section with a brief review on stable
convergence.

\subsection{Stable convergence and its generalizations}

Stable convergence has been introduced by R\'enyi in \cite{renyi-1963}
and subsequently investigated by various authors, e.g.
\cite{aldous-1978, cri-let-pra-2007, feigin-1985, jacod-1981,
  peccati-2008}. It is a strong form of convergence in distribution,
in the sense that it is intermediate between the simple convergence in
distribution and the convergence in probability. We recall here some
basic definitions. For more details, we refer the reader to
\cite{cri-let-pra-2007, hall-1980} and the references therein.

Let $(\Omega, {\cal A}, P)$ be a probability space, and let $S$ be a
Polish space, endowed with its Borel $\sigma$-field. A {\em kernel} on
$S$, or a random probability measure on $S$, is a collection
$K=\{K(\omega):\, \omega\in\Omega\}$ of probability measures on the
Borel $\sigma$-field of $S$ such that, for each bounded Borel real
function $f$ on $S$, the map
$$
\omega\mapsto 
K(f)(\omega)=\int f (x)\, K(\omega)(dx) 
$$
is $\cal A$-measurable.

On $(\Omega, {\mathcal A},P)$, let $(Y_t)$ be a sequence of $S$-valued
random variables and let $K$ be a kernel on $S$. Then we say that
$Y_t$ converges {\em stably} to $K$, and we write
$Y_t\stackrel{stably}\longrightarrow K$, if
$$
P(Y_t \in \cdot \,|\, H)\stackrel{weakly}\longrightarrow 
E\left[K(\cdot)\,|\, H \right]
\qquad\hbox{for all } H\in{\cal A}\; \hbox{with } P(H) > 0.
$$

Clearly, if $Y_t\stackrel{stably}\longrightarrow K$, then $Y_t$
converges in distribution to the probability distribution
$E[K(\cdot)]$. Moreover the convergence in probability 
of $Y_t$ to a random variable $Y$ is equivalent to the stable convergence 
of $Y_t$ to a special kernel, which is the Dirac kernel $K=\delta_Y$.  

We next mention two generalizations of the notion of stable
convergence: a strong form of stable convergence, introduced and
studied in \cite{cri-let-pra-2007}, and the almost sure conditional
convergence, introduced and studied in \cite{crimaldi-2009}, that will
be used later on.

For each $t$, let ${\cal F}_t$ be a sub-$\sigma$-field of $\cal
A$. We say that $Y_t$ converges to $K$ {\em stably in the strong sense}, with
respect to the sequence ${\cal F}=({\cal F}_t)$ (called conditioning system), 
if 
$$
E\left[f(Y_t)\,|\,{\cal F}_t\right]\stackrel{P}\longrightarrow K(f)
$$
for each bounded continuous real function $f$ on $S$. 

A strengthening of the stable convergence in the strong sense is the
following. If $K_t$ denotes a version of the conditional distribution
of $Y_t$ given ${\mathcal F}_t$, we say that $Y_t$ converges to $K$ in
the sense of the {\em almost sure conditional convergence}, with
respect to ${\cal F}$, if, for almost every $\omega$ in $\Omega$, the
probability measure $K_t(\omega)$ converges weakly to $K(\omega)$ and
so
$$
E\left[f(Y_t)\,|\,{\cal F}_t\right]\stackrel{a.s.}\longrightarrow K(f)
$$ 
for each bounded continuous real function $f$ on $S$. 

\section{Results}
\label{results}

This section is devoted to the statement of our main results, whose
proofs are postponed to Section \ref{proofs}. The first result is a
Central Limit Theorem for the total fraction $Z_t$ of red balls in the
whole system. Note that the fluctuations of $Z_t$ around $Z$ scale as
$t^{-1/2}$ for all values of $\alpha$.

\begin{theorem}\label{th-media-proporz}
We have 
$$
\sqrt{t}(Z_t-Z)\stackrel{stably}\longrightarrow 
{\cal N}\left(0,\frac{1}{N}(Z-Z^2)\right)
$$ 
The above sequence also converges in the sense of the almost sure
conditional convergence with respect to
the filtration $\mathcal F$.
\end{theorem}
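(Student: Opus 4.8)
The plan is to exhibit $\sqrt{t}(Z_t - Z)$ as a terminal value of a martingale-difference sum and then apply a stable central limit theorem for martingale arrays. Since $(Z_t)$ is an $\mathcal F$-martingale converging a.s. to $Z$, I would write the telescoping identity
$$
Z - Z_t = \sum_{k \geq t}(Z_{k+1} - Z_k),
\qquad
Z_{k+1} - Z_k = \frac{\xi_{k+1}}{m+k+1},
$$
where $\xi_{k+1} := \bigl(\tfrac1N\sum_{i=1}^N I_{k+1}(i)\bigr) - Z_k$ is, by the recalled relations, an $\mathcal F$-martingale increment (i.e. $E[\xi_{k+1}\mid\mathcal F_k]=0$) and is bounded. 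Thus $\sqrt{t}(Z_t - Z) = -\sqrt{t}\sum_{k\geq t}\frac{\xi_{k+1}}{m+k+1}$, and the goal reduces to a stable CLT for a tail sum of bounded martingale increments with deterministic weights $\sqrt{t}/(m+k+1)$.

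The core computation is the conditional variance. I would compute the predictable quadratic variation of the increments: since $\xi_{k+1}$ is a centered average of conditionally independent (across urns) Bernoulli-type variables, $E[\xi_{k+1}^2\mid\mathcal F_k] = \tfrac1{N^2}\sum_{i=1}^N \mathrm{Var}(I_{k+1}(i)\mid\mathcal F_k)$. Each conditional variance is $p_k(i)(1-p_k(i))$ with $p_k(i)=\alpha Z_k + (1-\alpha)Z_k(i)$, which by the almost sure synchronization $Z_k(i)\to Z$ converges a.s. to $Z(1-Z)$. Hence $E[\xi_{k+1}^2\mid\mathcal F_k]\to \tfrac1N Z(1-Z)$ a.s. Weighting by $t/(m+k+1)^2$ and summing over $k\geq t$, the deterministic factor satisfies $t\sum_{k\geq t}(m+k+1)^{-2}\to 1$ as $t\to\infty$; combining this with the a.s. limit of the conditional variances gives
$$
t\sum_{k\geq t} E[\xi_{k+1}^2\mid\mathcal F_k]\,(m+k+1)^{-2}
\stackrel{a.s.}\longrightarrow \frac{1}{N}\,Z(1-Z)
= \frac1N(Z - Z^2),
$$
which is exactly the random variance in the statement. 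Establishing this a.s. convergence carefully — justifying that the synchronization limit can be inserted uniformly enough to control the tail sum, rather than merely termwise — is the step I expect to require the most care, and is most cleanly handled via a Toeplitz/Cesàro-type averaging lemma (the sort of auxiliary result the paper relegates to its appendix).

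With the variance identified, I would invoke a stable CLT for martingale difference arrays (e.g. the version recalled in the cited \cite{cri-let-pra-2007} or \cite{hall-1980}), whose hypotheses are: the a.s. convergence of the conditional variance just established, and a conditional Lindeberg / negligibility condition. The latter is immediate here because each $\xi_{k+1}$ is uniformly bounded while its weight $\sqrt{t}/(m+k+1)$ is of order $t^{-1/2}$ on the dominant range $k\asymp t$, so the individual terms are uniformly small and a fourth-moment bound kills the Lindeberg remainder. The stable CLT then yields convergence to the Gaussian kernel $\mathcal N\bigl(0,\tfrac1N(Z-Z^2)\bigr)$, and since the array is bounded with a.s.-convergent conditional variance, the same hypotheses give the stronger almost sure conditional convergence with respect to $\mathcal F$ — the conditional characteristic function $E[\exp(iu\sqrt{t}(Z_t-Z))\mid\mathcal F_t]$ converges a.s. to $\exp\bigl(-\tfrac{u^2}{2N}(Z-Z^2)\bigr)$ — which is precisely the second assertion.
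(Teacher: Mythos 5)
Your route to the first assertion (the stable convergence) is essentially the paper's: you view $\sqrt{t}(Z_t-Z)$ as a weighted tail sum of the increments of the martingale $(Z_t)$, and your conditional variance computation (conditional independence of the $I_{k+1}(i)$ given ${\mathcal F}_k$, plus synchronization $Z_k(i)\to Z$) is exactly the one appearing in the paper's proof. The paper feeds this into a ready-made result (Theorem 2.2 of \cite{crimaldi-2009}, see also Proposition 1 of \cite{ber-cri-pra-rig-2011}) rather than into a Hall--Heyde-type array CLT, but for the stable convergence alone this difference is cosmetic: your conditional Lindeberg condition does hold trivially, and the infinite tail array can be truncated at $T_t\gg t$ since $tE[(Z_{T_t}-Z)^2]\to 0$.

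The genuine gap is in the second assertion. The tools you invoke --- martingale array CLTs with a conditional Lindeberg condition and convergent conditional variances, as in \cite{hall-1980} or \cite{cri-let-pra-2007} --- deliver stable convergence, or at most stable convergence in the \emph{strong} sense (conditional laws converging \emph{in probability}); they do not deliver almost sure conditional convergence, and your closing sentence simply asserts the a.s. convergence of the conditional characteristic functions, which is precisely what has to be proved. The theorem the paper uses for this stronger conclusion requires the hypothesis
$$
t\sum_{k\geq t}(Z_{k+1}-Z_k)^2 \stackrel{a.s.}\longrightarrow \frac{1}{N}(Z-Z^2),
$$
i.e. almost sure control of the \emph{actual squared increments}, not of their conditional expectations. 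Passing from the conditional variances (which your Toeplitz argument handles, pathwise and easily) to the actual squares is a genuinely stochastic step: the paper does it with Lemma \ref{lemma-kro}, whose proof runs through an $L^2$-bounded auxiliary martingale plus Kronecker/Abel summation --- this is the appendix lemma you allude to, but it is not a Ces\`aro/Toeplitz averaging result, and deterministic averaging cannot replace it. (A simple second-moment bound shows that the difference between the weighted tail sums of squares and of conditional variances vanishes in probability, which suffices for the stable CLT, but not for the a.s. statement.) So to complete the proof you must either verify the displayed a.s. convergence via Lemma \ref{lemma-kro} and then cite \cite{crimaldi-2009}, as the paper does, or give a direct almost sure argument for the conditional characteristic functions; neither follows from the hypotheses you have actually checked.
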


As observed in \cite{aletti-2009}, an advantage of having the almost
sure conditional convergence is the fact that it allows to prove that
the distribution of the limit random variable $Z$ has no point masses
in $(0,1)$. As a consequence of this, we get that the limit Gaussian
kernel in the above theorem is not degenerate. Indeed, the following
result holds true.

\begin{theorem}\label{th-legge-Z}
We have $P(Z=0)+P(Z=1)<1$ and $P(Z=z)=0$ for each $z\in (0,1)$. 
\end{theorem}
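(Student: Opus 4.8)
The statement splits into two essentially independent assertions, and I would treat them separately. Write $z_0=a/m\in(0,1)$ for the (deterministic) initial proportion. Since $Z$ is an a.s.\ limit of proportions it takes values in $[0,1]$, and since $Z(1-Z)\ge 0$ with equality exactly on $\{Z\in\{0,1\}\}$, the first assertion $P(Z=0)+P(Z=1)<1$ is \emph{equivalent} to $E[Z(1-Z)]>0$. The second assertion, absence of atoms of $Z$ in $(0,1)$, is where I would invoke the almost sure conditional convergence supplied by Theorem~\ref{th-media-proporz}.

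For the first assertion, the plan is to control $u_t:=E[Z_t(1-Z_t)]$ and pass to the limit. Using $Z_{t+1}-Z_t=\frac{W_{t+1}-Z_t}{m+t+1}$ with $W_{t+1}=\frac1N\sum_i I_{t+1}(i)$, the conditional independence of the urns, and $E[I_{t+1}(i)\mid\mathcal F_t]=\alpha Z_t+(1-\alpha)Z_t(i)=:p_t(i)$, one obtains $E[Z_{t+1}(1-Z_{t+1})\mid\mathcal F_t]=Z_t(1-Z_t)-\frac{1}{N^2(m+t+1)^2}\sum_{i=1}^N p_t(i)(1-p_t(i))$. Taking expectations and bounding each $p_t(i)(1-p_t(i))\le 1/4$ yields $u_{t+1}\ge u_t-\frac{1}{4N(m+t+1)^2}$. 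Summing this telescoping inequality and using $\sum_{k>m}k^{-2}<1/m$ together with $u_0=ab/m^2$, I get $E[Z(1-Z)]=\lim_t u_t\ge \frac{ab}{m^2}-\frac{1}{4Nm}$ (the limit is $E[Z(1-Z)]$ by bounded convergence since $Z_t\to Z$ a.s.\ and $Z_t(1-Z_t)\le 1/4$). This is strictly positive because $4Nab\ge 8ab\ge 4(a+b)>a+b=m$ for $a,b\ge1$ and $N\ge2$, proving $P(Z=0)+P(Z=1)<1$.

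For the second assertion, fix $z\in(0,1)$, set $A=\{Z=z\}$, and argue by contradiction assuming $P(A)>0$. By Theorem~\ref{th-media-proporz}, a version $K_t$ of the conditional law of $\sqrt t(Z_t-Z)$ given $\mathcal F_t$ converges weakly, for almost every $\omega$, to $\mathcal N\!\big(0,\tfrac1N Z(1-Z)\big)$; on $A$ this limit is $\mathcal N\!\big(0,\tfrac1N z(1-z)\big)$, a non-degenerate and hence non-atomic Gaussian. The structural observation I would exploit is that, since $Z_t$ is $\mathcal F_t$-measurable, $K_t$ is the image of the conditional law of $Z$ given $\mathcal F_t$ under $x\mapsto\sqrt t(Z_t-x)$; consequently $K_t$ carries an atom at $c_t:=\sqrt t(Z_t-z)$ of mass $P(Z=z\mid\mathcal F_t)$, and by L\'evy's martingale convergence theorem $P(Z=z\mid\mathcal F_t)\to\mathbf 1_A$ almost surely.

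The main obstacle—and the only point requiring care—is to turn ``$K_t$ has an atom of mass tending to $1$ yet converges weakly to a continuous law'' into a contradiction on the positive-probability set $A$. I would do this in two steps, working at a fixed $\omega\in A$ where both a.s.\ convergences hold. First, weak convergence makes $\{K_t\}$ tight, so there is $R$ with $K_t([-R,R])\ge 1/2$ for all $t$; since an atom located outside $[-R,R]$ would force $K_t([-R,R])\le 1-P(Z=z\mid\mathcal F_t)\to 0$, the location $c_t$ must stay bounded. Second, passing to a subsequence with $c_t\to c\in[-R,R]$, for every $\varepsilon>0$ the closed interval $[c-\varepsilon,c+\varepsilon]$ eventually contains $c_t$, so the portmanteau theorem gives $\mathcal N\!\big(0,\tfrac1N z(1-z)\big)([c-\varepsilon,c+\varepsilon])\ge\limsup_t K_t(\{c_t\})=1$; letting $\varepsilon\to0$ produces an atom of the limiting Gaussian, a contradiction. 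Since this holds for a.e.\ $\omega\in A$, we conclude $P(A)=0$ for every $z\in(0,1)$, which completes the proof.
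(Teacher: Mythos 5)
Your proof is correct; it reaches the same conclusions as the paper but is more self-contained at two points. For the first assertion the paper simply cites Lemma 1 of \cite{daipra-2014} for $E[Z(1-Z)]>0$, whereas you rederive it: your computation $E[Z_{t+1}(1-Z_{t+1})\,|\,\mathcal F_t]=Z_t(1-Z_t)-\frac{1}{N^2(m+t+1)^2}\sum_{i} p_t(i)(1-p_t(i))$ is exact, and the resulting explicit bound $E[Z(1-Z)]\ge \frac{ab}{m^2}-\frac{1}{4Nm}>0$ (valid since $4Nab>a+b$ when $a,b\ge 1$ and $N\ge 2$) is a small quantitative bonus the citation does not provide. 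For the second assertion your skeleton is identical to the paper's: on $\{Z=z\}$ the conditional law $K_t$ carries an atom at $c_t=\sqrt t(Z_t-z)$ of mass $E[I_{\{Z=z\}}\,|\,\mathcal F_t]\to 1$ by martingale convergence, while Theorem~\ref{th-media-proporz} forces $K_t$ to converge weakly a.s.\ to the atomless Gaussian $\mathcal N\left(0,\frac{1}{N}z(1-z)\right)$. The difference is the finishing move. The paper kills the atom in one line using the discrepancy metric of \cite{gibbs-2002} (sup over closed balls, which include singletons): since that metric metrizes weak convergence toward absolutely continuous limits, $K_t(\{c_t\})\le D[K_t,\mathcal N(\cdot)]\to 0$ uniformly in the location of the moving atom. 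You instead argue by hand: tightness keeps $c_t$ bounded, and a convergent subsequence together with the portmanteau inequality for the closed intervals $[c-\varepsilon,c+\varepsilon]$ plants an atom of mass $1$ in the Gaussian limit, a contradiction. Your route uses only standard weak-convergence facts and avoids the external metric-comparison lemma, at the price of a subsequence extraction; the paper's route is shorter but leans on the Gibbs--Su result. Both versions treat the genuinely delicate point---that the atom's location moves with $t$---correctly.
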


Theorems \ref{th-maggiore} and \ref{th-minore} below are the
main results of this paper. They are concerned with the fluctuations 
of $D_t(j)=Z_t(j) - Z_t$.

\begin{theorem}\label{th-maggiore}
For $1/2<\alpha\leq 1$, we have
$$
\sqrt{t}(Z_t(j)-Z_t)\stackrel{stably}\longrightarrow
{\mathcal N}\left(0, 
\frac{\left(1-\frac{1}{N}\right)}{(2\alpha-1)}(Z-Z^2)
\right).
$$
\noindent For $\alpha=1/2$, we have
$$
\frac{\sqrt{t}}{\sqrt{\ln(t)}}(Z_t(j)-Z_t)\stackrel{stably}\longrightarrow
{\mathcal N}\left(0, 
\left(1-\frac{1}{N}\right)(Z-Z^2)
\right).
$$

By Theorem \ref{th-legge-Z}, these limit Gaussian kernels are not
degenerate.

\end{theorem}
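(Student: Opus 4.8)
The plan is to exhibit $D_t=D_t(j)$ as a deterministic rescaling of an $\mathcal F$-martingale and then apply a stable central limit theorem for martingale arrays. Combining the elementary relations recalled in Section~\ref{prelimin}, one checks that
\[
D_{t+1}=\Big(1-\frac{\alpha}{m+t+1}\Big)D_t+\frac{L_{t+1}}{m+t+1},
\]
where $L_{t+1}:=\big(I_{t+1}(j)-\tfrac1N\sum_i I_{t+1}(i)\big)-E\big[I_{t+1}(j)-\tfrac1N\sum_i I_{t+1}(i)\,\big|\,\mathcal F_t\big]$ is an $\mathcal F$-martingale increment bounded by $2$. Setting $a_t=\prod_{k=1}^t(1-\frac{\alpha}{m+k})$, a deterministic sequence with $a_t\sim c_\alpha\,t^{-\alpha}$, and using $D_0=0$, the normalized process
\[
M_t:=\frac{D_t}{a_t}=\sum_{k=1}^t\frac{L_k}{(m+k)\,a_k}
\]
is a genuine $\mathcal F$-martingale. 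This reduces both statements to a single limit theorem for $M_t$, after which one multiplies back by the explicit factors $a_t\sqrt t$ (resp. $a_t\sqrt t/\sqrt{\ln t}$); a short computation shows that the powers of $t$ and the constant $c_\alpha$ cancel, leaving exactly the variances claimed.

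The heart of the proof is the asymptotics of the predictable quadratic variation $\langle M\rangle_t=\sum_{k=1}^t \frac{E[L_k^2\mid\mathcal F_{k-1}]}{(m+k)^2a_k^2}$. First I would compute the one-step conditional variance: since the $I_{t+1}(i)$ are conditionally independent Bernoulli with parameters $p_i=\alpha Z_t+(1-\alpha)Z_t(i)$,
\[
E[L_{t+1}^2\mid\mathcal F_t]=\Big(1-\tfrac1N\Big)^2 p_j(1-p_j)+\frac1{N^2}\sum_{i\neq j}p_i(1-p_i).
\]
By the almost sure synchronization $Z_t(i)\to Z$ of \cite{daipra-2014}, every $p_i\to Z$, so $E[L_{t+1}^2\mid\mathcal F_t]\to(1-\tfrac1N)(Z-Z^2)$ almost surely (the combinatorial factor $(1-\tfrac1N)^2+\tfrac{N-1}{N^2}$ simplifies to $1-\tfrac1N$). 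Since $(m+k)^{-2}a_k^{-2}\sim c_\alpha^{-2}k^{2\alpha-2}$, a Toeplitz/Kronecker-type summation (an auxiliary lemma) then yields, almost surely,
\[
\langle M\rangle_t\sim\frac{(1-\tfrac1N)(Z-Z^2)}{c_\alpha^{2}(2\alpha-1)}\,t^{2\alpha-1}\ \ (\alpha>\tfrac12),
\qquad
\langle M\rangle_t\sim\frac{(1-\tfrac1N)(Z-Z^2)}{c_{1/2}^{2}}\,\ln t\ \ (\alpha=\tfrac12).
\]

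With these asymptotics in hand, I would apply the martingale stable CLT in the form giving almost sure conditional convergence (the triangular-array version of \cite{crimaldi-2009, cri-let-pra-2007} recalled in the appendix) to the array $\chi_{t,k}=L_k/[(m+k)a_k s_t]$, with $s_t^2=t^{2\alpha-1}$ or $s_t^2=\ln t$. The required conditional variance condition $s_t^{-2}\langle M\rangle_t\to\eta^2$ is precisely the display above, and the negligibility (conditional Lindeberg) condition is immediate because the $\chi_{t,k}$ are dominated by the deterministic quantity $2/[(m+k)a_k s_t]$, which tends to $0$ uniformly in $k\le t$. This gives $M_t/s_t\to\mathcal N(0,\eta^2)$ stably and in the almost sure conditional sense; rescaling as above produces the two stated limits, with non-degeneracy supplied by Theorem~\ref{th-legge-Z}. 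The main obstacle I anticipate is the passage from the pointwise almost sure convergence of the conditional variances $E[L_k^2\mid\mathcal F_{k-1}]$ to the almost sure asymptotic equivalence for the weighted sum $\langle M\rangle_t$, that is, controlling the weighted Cesàro average when the total weight $\sum_{k\le t}k^{2\alpha-2}$ diverges (polynomially for $\alpha>\tfrac12$, logarithmically for $\alpha=\tfrac12$); this is exactly where the boundary case $\alpha=\tfrac12$ must be handled separately and where the precise constant $c_\alpha$ from the product $a_t$ enters, even though it ultimately cancels.
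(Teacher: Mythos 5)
Your proposal is correct and is essentially the paper's own proof in lightly repackaged form: you use the same martingale decomposition, namely the recursion $D_{t+1}=\bigl(1-\frac{\alpha}{m+t+1}\bigr)D_t+\Delta L_{t+1}$ with the compensated increments, the same product-coefficient asymptotics $a_t\sim c_\alpha t^{-\alpha}$ (Lemma~\ref{lemma-coef}), the same conditional-variance computation yielding $\bigl(1-\frac{1}{N}\bigr)(Z-Z^2)$, and the same Hall--Heyde stable martingale CLT (Theorem~\ref{hall}); your normalization of $D_t$ by $a_t$ into a single martingale, with the predictable-variation plus conditional-Lindeberg form of the CLT (both trivial here, since the increments admit the deterministic bound $2/[(m+k)a_k s_t]$, uniformly vanishing over $k\le t$, which also bridges from $\langle M\rangle_t$ to the raw sum of squares), is just a cosmetic variant of the paper's triangular array $Y_{t,k}=\sqrt{t}\,c_{k+1,t}\Delta L_{k+1}$ with conditions 1)--3) checked via Lemma~\ref{lemma-kro}. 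One caveat: the almost sure conditional convergence you additionally claim for $M_t/s_t$ is neither asserted in Theorem~\ref{th-maggiore} nor supported by the array result recalled in the appendix (Crimaldi's almost sure conditional theorem applies to convergent martingales such as $Z_t$, not to this blow-up), so that clause should be dropped --- stable convergence is all that is needed and is fully justified by your argument.
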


Note that, using the Cram\'er-Wold device (i.e. working with a linear
combination of the vector components), with a proof analogous to the
one of Theorem \ref{th-maggiore}, we can obtain the multivariate
version of the above convergences. Indeed, for instance, in the case
$1/2<\alpha\leq 1$, we can get
$$
\sqrt{t}{\bf D}_t:=\sqrt{t}\left[Z_t(1)-Z_t,\ldots,Z_t(N)-Z_t\right]
\stackrel{stably}\longrightarrow
{\mathcal N}\left(0, \frac{\bf V}{(2\alpha-1)}\right)
$$ 
where ${\bf V}_{j,j}=(1-1/N)(Z-Z^2)$ and ${\bf V}_{i,j}=-(Z-Z^2)/N$
for $i\neq j$. \\

Theorems \ref{th-media-proporz} and \ref{th-maggiore} state, in
particular, that $Z_t(j) - Z_t$ and $Z_t - Z$ have the same scaling
when $\alpha>1/2$. Using a result in \cite{ber-cri-pra-rig-2011} (see
also Proposition \ref{blocco} in the Appendix), these two theorems
combine yielding the following statement.

\begin{theorem}\label{th-maggiore-Z}
For $1/2<\alpha\leq 1$, we have  
$$\sqrt{t}(Z_t(j)-Z)\stackrel{stably}\longrightarrow 
{\mathcal N}\left(0, 
\left[\frac{1}{N}+
\frac{\left(1-\frac{1}{N}\right)}{(2\alpha-1)}
\right]
(Z-Z^2)
\right).
$$
\noindent 
For $\alpha=1/2$, we have 
$$
\frac{\sqrt{t}}{\sqrt{\ln(t)}}(Z_t(j)-Z)\stackrel{stably}\longrightarrow 
{\mathcal N}\left(0, 
\left(1-\frac{1}{N}\right)
(Z-Z^2)
\right).
$$
\end{theorem}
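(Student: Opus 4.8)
The plan is to exploit the elementary algebraic decomposition
$$
Z_t(j) - Z = \big(Z_t(j) - Z_t\big) + \big(Z_t - Z\big),
$$
and to combine the two limit theorems already established for the two summands: Theorem \ref{th-maggiore} for $Z_t(j) - Z_t$ and Theorem \ref{th-media-proporz} for $Z_t - Z$. The subtlety is that stable convergence is not preserved under sums of arbitrary sequences; one needs a genuine joint statement. This is exactly what Proposition \ref{blocco} (the result borrowed from \cite{ber-cri-pra-rig-2011}) is designed to supply: fed with two sequences that each converge in the strong / almost sure conditional sense with respect to the common filtration $\mathcal F$, together with information on how their fluctuations interact, it returns the stable limit of the sum.

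For the case $1/2 < \alpha \le 1$ both summands live on the same scale $t^{-1/2}$. I would first record that $\sqrt t\,(Z_t - Z)$ converges in the almost sure conditional sense to $\mathcal N\!\big(0,\tfrac1N(Z-Z^2)\big)$ (Theorem \ref{th-media-proporz}) and that $\sqrt t\,(Z_t(j)-Z_t)$ converges (stably, and in fact in the same strong sense) to $\mathcal N\!\big(0,\tfrac{(1-1/N)}{2\alpha-1}(Z-Z^2)\big)$ (Theorem \ref{th-maggiore}). The heart of the matter is to show that these two families of fluctuations are asymptotically conditionally orthogonal, so that Proposition \ref{blocco} produces a single Gaussian kernel whose variance is the sum of the two variances, which is precisely the bracketed coefficient in the statement. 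Concretely, writing $\xi_i = I_{t+1}(i) - E[I_{t+1}(i)\mid\mathcal F_t]$, the martingale increment of $Z_t$ is proportional to $\frac1N\sum_i\xi_i$ while that of $D_t(j)$ is proportional to $\xi_j - \frac1N\sum_i\xi_i$; using the conditional independence of the $I_{t+1}(i)$ and $\mathrm{Var}(I_{t+1}(i)\mid\mathcal F_t)\to Z-Z^2$ (a consequence of synchronization, $Z_t(i)\to Z$ for every $i$), the conditional cross-covariance is proportional to $\big(Z-Z^2\big) - \frac1N\sum_i(Z-Z^2) = 0$ in the limit. This vanishing cross term is what guarantees the absence of a covariance contribution and hence the clean addition of the variances.

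For the boundary case $\alpha = 1/2$ the two summands live on different scales, and the argument is even simpler. Multiplying the decomposition by $\sqrt t/\sqrt{\ln t}$ gives
$$
\frac{\sqrt t}{\sqrt{\ln t}}\big(Z_t(j)-Z\big)
= \frac{\sqrt t}{\sqrt{\ln t}}\big(Z_t(j)-Z_t\big)
+ \frac{1}{\sqrt{\ln t}}\,\sqrt t\,\big(Z_t - Z\big).
$$
Here the first term converges stably to $\mathcal N\!\big(0,(1-1/N)(Z-Z^2)\big)$ by Theorem \ref{th-maggiore}, while the second term is the product of the tight sequence $\sqrt t\,(Z_t - Z)$ (which converges stably by Theorem \ref{th-media-proporz}, hence is bounded in probability) and the deterministic factor $1/\sqrt{\ln t}\to 0$, so it vanishes in probability. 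A Slutsky-type argument for stable convergence then transfers the stable limit of the first term to the whole expression, giving the stated kernel with no contribution from $Z_t - Z$.

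The step I expect to be the main obstacle is not the algebra but the rigorous verification of the hypotheses of Proposition \ref{blocco} in the regime $\alpha>1/2$: one must check that the two summands converge jointly (not merely marginally) to a bivariate Gaussian kernel with diagonal covariance, and for this the asymptotic conditional orthogonality sketched above has to be established with the correct rate, controlling the error terms coming from $Z_t(i)-Z$ and from the drift of $D_t(j)$. The non-degeneracy of the resulting kernel is then immediate from Theorem \ref{th-legge-Z}, exactly as noted after Theorem \ref{th-maggiore}.
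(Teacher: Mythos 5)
Your decomposition $Z_t(j)-Z = (Z_t(j)-Z_t)+(Z_t-Z)$, the results you invoke, and your treatment of the case $\alpha=1/2$ (the first term converges stably by Theorem \ref{th-maggiore}, the second is tight times $(\ln t)^{-1/2}$, hence vanishes in probability, and a Slutsky-type argument concludes) are exactly the paper's. The gap is in the case $1/2<\alpha\leq 1$: you have misread Proposition \ref{blocco}, and the misreading matters. Its hypotheses contain \emph{no} condition on how the fluctuations of the two sequences interact, and it does not ask both sequences to converge ``in the strong / almost sure conditional sense''. The hypotheses are: (i) $C_n$ is ${\mathcal G}_n$-measurable and converges \emph{stably} to $M$; (ii) $D_n$ is ${\mathcal G}_\infty$-measurable and converges to $N$ \emph{stably in the strong sense} with respect to ${\mathcal G}$. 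The joint stable convergence of $(C_n,D_n)$ to the \emph{product} kernel $M\times N$ --- i.e.\ precisely the conditional independence that makes the variances add up to $\bigl[\frac{1}{N}+\frac{1-1/N}{2\alpha-1}\bigr](Z-Z^2)$ --- is the \emph{conclusion} of the proposition, not a hypothesis to be verified. Consequently the paper's proof is two lines: $\sqrt{t}\,(Z_t(j)-Z_t)$ is ${\mathcal F}_t$-measurable and converges stably (Theorem \ref{th-maggiore}); $\sqrt{t}\,(Z_t-Z)$ is $\bigvee_t{\mathcal F}_t$-measurable and converges in the almost sure conditional sense (Theorem \ref{th-media-proporz}), hence stably in the strong sense; apply Proposition \ref{blocco} and sum the (conditionally independent, centered Gaussian) components. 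The step you single out as ``the main obstacle'' --- establishing asymptotic conditional orthogonality with rates and a bivariate CLT with diagonal covariance --- is not needed at all; worse, attempting it as you describe would run into a real difficulty, because $\sqrt{t}\,(Z_t-Z)=-\sqrt{t}\sum_{k\geq t}(Z_{k+1}-Z_k)$ is a \emph{tail} sum, not ${\mathcal F}_t$-measurable, so it cannot be fed jointly with $\sqrt{t}\,D_t(j)$ into a Hall--Heyde-type martingale array; bypassing exactly this is what the almost sure conditional convergence together with Proposition \ref{blocco} accomplish.

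A side claim in your write-up is also false and reveals the same confusion: you assert that $\sqrt{t}\,(Z_t(j)-Z_t)$ converges ``in fact in the same strong sense''. It cannot. For an ${\mathcal F}_t$-measurable sequence $Y_t$ one has $E[f(Y_t)\,|\,{\mathcal F}_t]=f(Y_t)$, so strong-sense convergence to a kernel $K$ with respect to ${\mathcal F}$ would give $f(Y_t)\to K(f)$ in probability for every bounded continuous $f$; then $K(fg)=K(f)K(g)$ a.s.\ for all such $f,g$ (since $(fg)(Y_t)=f(Y_t)g(Y_t)$), which forces $K$ to be almost surely a Dirac kernel --- impossible here, since the limit kernel is non-degenerate by Theorem \ref{th-legge-Z}. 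The asymmetry in Proposition \ref{blocco} (plain stable convergence for the adapted sequence, strong-sense convergence for the ${\mathcal G}_\infty$-measurable one) is essential, not an artifact. Once the proposition is read correctly, your argument collapses to the paper's proof and your cross-covariance computation, while a correct heuristic for why no covariance term appears, plays no role in the formal argument.
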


For $\alpha < \frac{1}{2}$, synchronization is slower, as $D_t(j)=Z_t(j) -
Z_t$ scales like $t^{-\alpha}$, as the following Theorem establishes.

\begin{theorem}\label{th-minore}
For $0<\alpha<1/2$, we have
$$
{\widetilde D}_t(j):=
t^{\alpha} (Z_t(j)-Z_t)\stackrel{a.s./L^1}\longrightarrow {\widetilde D}
$$
for some real random variable ${\widetilde D}$ with 
$P({\widetilde D}\neq 0)>0$.
\end{theorem}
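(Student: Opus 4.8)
The plan is to show that the rescaled quantity $\widetilde D_t(j)=t^\alpha D_t(j)$ is (asymptotically) a martingale that converges in $L^2$, and hence almost surely and in $L^1$, to a limit $\widetilde D$; the final task is to argue that this limit is not a.s. zero. Let me think about the structure.

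**Setting up the martingale / difference equation.** From the recursions in the preliminaries, the key relation is
$$
E[D_{t+1}(j)\mid\mathcal F_t]=D_t(j)-\frac{\alpha D_t(j)}{m+t+1}=\left(1-\frac{\alpha}{m+t+1}\right)D_t(j).
$$
Wait, let me verify. We have $D_{t+1}(j)=Z_{t+1}(j)-Z_{t+1}$, and
$$
E[Z_{t+1}(j)\mid\mathcal F_t]-Z_t(j)=\frac{-\alpha D_t(j)}{m+t+1}.
$$
Also $E[Z_{t+1}\mid\mathcal F_t]=Z_t$ since $(Z_t)$ is a martingale. So
$$
E[D_{t+1}(j)\mid\mathcal F_t]=Z_t(j)-\frac{\alpha D_t(j)}{m+t+1}-Z_t=D_t(j)\left(1-\frac{\alpha}{m+t+1}\right).
$$
Good. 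So $D_t(j)$ is "almost" a supermartingale (in the sign sense depending on $D_t(j)$), but more precisely it decays by a factor $(1-\alpha/(m+t+1))$ in conditional mean.

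**Compensating to get a martingale.** The natural move: define $a_t=\prod_{k=1}^{t}\left(1-\frac{\alpha}{m+k}\right)^{-1}$, so that $M_t(j):=a_t D_t(j)$ satisfies $E[M_{t+1}(j)\mid\mathcal F_t]=M_t(j)$, i.e. $(M_t(j))$ is an $\mathcal F$-martingale. By standard asymptotics, $\prod_{k=1}^t(1-\alpha/(m+k))\sim c\, t^{-\alpha}$ for a constant $c>0$ (this is a Gamma-function / Euler product asymptotic), so $a_t\sim c^{-1}t^\alpha$. Hence $\widetilde D_t(j)=t^\alpha D_t(j)\sim c\, M_t(j)$, and it suffices to show $M_t(j)$ converges a.s. and in $L^1$.

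**Convergence of the martingale.** For $L^1$/a.s. convergence I want $(M_t(j))$ to be bounded in $L^2$ (or at least uniformly integrable). Using $E[D_t(j)^2]\sim d(\alpha)t^{-2\alpha}$ from (3) (equation \ref{rel-4}) for $0<\alpha<1/2$, we get
$$
E[M_t(j)^2]=a_t^2\,E[D_t(j)^2]\sim c^{-2}t^{2\alpha}\cdot d(\alpha)t^{-2\alpha}=c^{-2}d(\alpha)<\infty,
$$
so $(M_t(j))$ is $L^2$-bounded. Since it is an $L^2$-bounded martingale, it converges a.s. and in $L^2$ (hence in $L^1$) to a limit $M_\infty$; consequently $\widetilde D_t(j)\to \widetilde D:=c\,M_\infty$ a.s. and in $L^1$. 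The condition $\alpha<1/2$ is exactly what makes $a_t^2 E[D_t(j)^2]$ stay bounded — for $\alpha\ge 1/2$ this product would blow up (logarithmically or polynomially), which is consistent with the different scaling regime.

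**The main obstacle: showing $\widetilde D\neq 0$ with positive probability.** The hard part is ruling out $\widetilde D=0$ a.s.; $L^2$-boundedness alone gives convergence but not nontriviality of the limit. The natural strategy is to show $\limsup_t E[M_t(j)^2]>0$, or more sharply that $E[M_\infty^2]=\lim_t E[M_t(j)^2]>0$. Since $M_t(j)$ converges in $L^2$, we have $E[M_\infty^2]=\lim_t E[M_t(j)^2]=c^{-2}d(\alpha)>0$ by the asymptotic above, provided $d(\alpha)>0$ — and the footnote to \ref{rel-4} asserts $d(\alpha)\in(0,+\infty)$. Thus $E[\widetilde D^2]=d(\alpha)>0$, forcing $P(\widetilde D\neq 0)>0$. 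The only subtlety to handle carefully is justifying the interchange $\lim_t E[M_t(j)^2]=E[M_\infty^2]$, which follows from $L^2$ convergence of the martingale; so the nontriviality is really inherited directly from the sharp (two-sided) estimate in \ref{rel-4}. I would therefore lean on \ref{rel-4} not just for the upper bound (uniform integrability) but crucially for the matching lower bound that delivers $P(\widetilde D\neq 0)>0$.
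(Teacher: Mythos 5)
Your proposal is correct, but it follows a genuinely different route from the paper's. The paper works directly with $\widetilde D_t=t^{\alpha}D_t$: it first shows (Lemma \ref{prop-gen}) that $(\widetilde D_t)_t$ is an $\mathcal F$-quasi-martingale, because its conditional drift is only $O(1/t^2)\,\alpha\widetilde D_t$, and then gets a.s.\ and $L^1$ convergence from quasi-martingale theory together with the $L^2$-boundedness supplied by (\ref{rel-4}). The costly part of the paper's argument is precisely the non-degeneracy: since $\widetilde D_t$ is \emph{not} an exact martingale, $L^2$-boundedness does not by itself give convergence of second moments, so the authors prove $\sup_t E\big[|\widetilde D_t|^{2+\epsilon}\big]<+\infty$ by deriving a difference equation for $x_t=E[|D_t|^{2+\epsilon}]$ and solving it via Lemma \ref{solution}; this uniform integrability of $\widetilde D_t^2$ then justifies $E[\widetilde D^2]=\lim_t t^{2\alpha}E[D_t^2]>0$. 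You avoid all of this by replacing $t^\alpha$ with the exact compensator $a_t=\prod_{k=1}^{t}\left(1-\frac{\alpha}{m+k}\right)^{-1}$, so that $M_t=a_tD_t$ is a true $\mathcal F$-martingale (your drift identity $E[D_{t+1}|\mathcal F_t]=\left(1-\frac{\alpha}{m+t+1}\right)D_t$ agrees with the relations in the preliminaries), with $a_t\sim c^{-1}t^{\alpha}$ by the same Euler-product asymptotic the paper proves in Lemma \ref{lemma-coef}. Doob's theorem for $L^2$-bounded martingales then gives convergence a.s.\ and in $L^2$, and $L^2$-convergence yields $E[M_\infty^2]=\lim_t E[M_t^2]=c^{-2}d(\alpha)>0$ at no extra cost --- exactly the limit/expectation interchange the paper had to purchase with the $(2+\epsilon)$-moment recursion. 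Note that both proofs rely on the two-sided asymptotic (\ref{rel-4}), quoted from \cite{daipra-2014}, for the upper bound (boundedness) \emph{and} the lower bound (non-degeneracy), so your dependence on it is not circular and is no stronger than the paper's. What your route buys is brevity and transparency: the quasi-martingale lemma and the higher-moment computation disappear entirely. What the paper's route buys is robustness --- its Lemma \ref{prop-gen} is stated for all $\alpha\in[0,1]$, and the explicit $L^{2+\epsilon}$ control is a tool that survives perturbations for which no exact multiplicative compensation exists --- but for the statement at hand your argument is complete.
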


Again, if we combine the above result with Theorem
\ref{th-media-proporz}, observing that
$$
t^{\alpha}(Z_t(j)-Z)=t^{\alpha}(Z_t(j)-Z_t)+ t^{\alpha-1/2} \sqrt{t}(Z_t-Z),
$$
we obtain the following corollary.

\begin{cor}
For $0<\alpha<1/2$, we have 
$$
t^\alpha(Z_t(j)-Z)\stackrel{P}\longrightarrow {\widetilde D}.
$$
\end{cor}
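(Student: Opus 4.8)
The plan is to exploit the additive decomposition already displayed immediately before the statement,
$$
t^{\alpha}(Z_t(j)-Z)=t^{\alpha}(Z_t(j)-Z_t)+ t^{\alpha-1/2}\,\sqrt{t}(Z_t-Z),
$$
and to treat the two summands separately: the first converges to $\widetilde{D}$ in a strong sense, while the second is asymptotically negligible. For the first summand there is nothing to do, since Theorem \ref{th-minore} gives directly that $t^{\alpha}(Z_t(j)-Z_t)\to\widetilde{D}$ almost surely and in $L^1$, hence in particular in probability.

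The heart of the argument is the treatment of the second summand, and here the crucial observation is simply that $0<\alpha<1/2$ forces the exponent $\alpha-1/2$ to be strictly negative, so that the deterministic prefactor $t^{\alpha-1/2}$ tends to $0$. By Theorem \ref{th-media-proporz}, the sequence $\sqrt{t}(Z_t-Z)$ converges stably, hence in distribution, to a proper (Gaussian mixture) law; in particular it is tight, i.e. bounded in probability. I would then invoke the elementary fact that a bounded-in-probability sequence multiplied by a deterministic null sequence converges to $0$ in probability. Concretely, given $\varepsilon>0$ and $\delta>0$, tightness furnishes an $M$ with $P\big(|\sqrt{t}(Z_t-Z)|>M\big)<\delta$ uniformly in $t$; since $t^{\alpha-1/2}\to0$, for all large $t$ one has $\varepsilon/t^{\alpha-1/2}>M$, whence
$$
P\big(|t^{\alpha-1/2}\sqrt{t}(Z_t-Z)|>\varepsilon\big)\le P\big(|\sqrt{t}(Z_t-Z)|>\varepsilon/t^{\alpha-1/2}\big)<\delta,
$$
so that $t^{\alpha-1/2}\sqrt{t}(Z_t-Z)\stackrel{P}\longrightarrow 0$.

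Finally, since convergence in probability is preserved under sums, the decomposition yields $t^{\alpha}(Z_t(j)-Z)\stackrel{P}\longrightarrow\widetilde{D}+0=\widetilde{D}$, which is the claim. This proof is essentially an application of Slutsky's lemma to the given decomposition, and as such presents no serious obstacle; the only point requiring a little care is the tightness step for the vanishing Gaussian term. I would also emphasize why the conclusion is stated only with convergence \emph{in probability}, rather than in the almost sure or stable sense enjoyed by the individual pieces: although $t^{\alpha-1/2}\sqrt{t}(Z_t-Z)$ vanishes, it does so only in probability (the rescaled fluctuations $\sqrt{t}(Z_t-Z)$ have no almost sure limit), and this limits the mode of convergence available for the full quantity $t^{\alpha}(Z_t(j)-Z)$.
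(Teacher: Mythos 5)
Your proof is correct and follows exactly the route the paper intends: the same decomposition $t^{\alpha}(Z_t(j)-Z)=t^{\alpha}(Z_t(j)-Z_t)+t^{\alpha-1/2}\sqrt{t}(Z_t-Z)$, with Theorem \ref{th-minore} handling the first term and tightness of $\sqrt{t}(Z_t-Z)$ (from the stable convergence in Theorem \ref{th-media-proporz}) killing the second, since $t^{\alpha-1/2}\to 0$. The paper leaves this argument implicit, so your write-up is simply a fuller version of its proof, including the correct observation that the mode of convergence degrades to convergence in probability because the Gaussian term vanishes only in that sense.
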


We conclude this section with a comment on the possible statistical
applications of the shown results.

\begin{remark}
\rm 
It is worthwhile to note that, since $U_t=(Z_t-Z_t^2)$  
is a strongly consistent estimator of $U=(Z-Z^2)$, from
Theorem \ref{th-media-proporz} and \ref{th-maggiore} we obtain the
convergences in distribution to the standard normal distribution
${\mathcal N}(0,1)$ of the following sequences:
\begin{itemize}
\item $\sqrt{Nt}(Z_t-Z)/\sqrt{U_t}$ for any $\alpha$,
\item $(1-1/N)^{-1/2}\sqrt{(2\alpha-1)}\sqrt{t}(Z_t(j)-Z_t)/\sqrt{U_t}$ 
for $\alpha\in (1/2,1]$,
\item $(1-1/N)^{-1/2}\sqrt{t}\ln(t)^{-1/2} (Z_t(j)-Z_t)/\sqrt{U_t}$ 
for $\alpha=1/2$.
\end{itemize}

The first of the above convergences can be used in order to provide an
asymptotic confidence interval for the limit random variable $Z$ (see
\cite{ber-cri-pra-rig-2010}). The other two can be useful in order to
construct statistical tests and asymptotic confidence intervals for
the parameter $\alpha$.
\end{remark}

\section{Proofs} 
\label{proofs}

\subsection{Proof of Theorem \ref{th-media-proporz}}

We will prove the almost sure conditional convergence (and so the
stable convergence) using Theorem 2.2 in \cite{crimaldi-2009} (see
also Prop. 1 in \cite{ber-cri-pra-rig-2011}).  To this purpose, we
observe that $(Z_t)_t$ is an $\cal F$-martingale which converges to a
random variable $Z$ a.s. and in mean and which satisfies the following two
conditions:
\begin{itemize}
\item[1)] $E\left[\,\sup_{k} \sqrt{k}|Z_{k+1}-Z_k|\,\right]<+\infty$;
\item[2)] $t\sum_{k\geq t} (Z_{k+1}-Z_k)^2\stackrel{a.s.}\longrightarrow
\frac{1}{N}(Z-Z^2)$.
\end{itemize}
Indeed, the first condition immediatly follows from  
$$
|Z_{k+1}-Z_k|=\frac{1}{m+k+1}
\left|
\frac{\sum_{i=1}^N I_{k+1}(i)}{N}-Z_k
\right|=O(k^{-1}).
$$ 

Regarding the second condition, we observe that  
\begin{equation*}
t\sum_{k\geq t} (Z_{k+1}-Z_k)^2
=t\sum_{k\geq t}\frac{1}{(m+k+1)^2}
\left(
\frac{\sum_{i=1}^N I_{k+1}(i)}{N}-Z_k
\right)^2
\end{equation*}
and so the desidered convergence  follows by Lemma \ref{lemma-kro} (with 
$a_k=1$, $b_k=k$, 
$Y_k=k^2\big(\frac{\sum_{i=1}^N I_{k+1}(i)}{N}-Z_k\big)^2/(m+k+1)^2$ 
and ${\cal G}_k={\cal F}_{k+1}$) since
$$
\sum_{k=1}^{\infty}\frac{1}{k^2}\frac{k^4E\left[\left(
\frac{\sum_{i=1}^N I_{k+1}(i)}{N}-Z_k
\right)^4\right]}{(m+k+1)^4}
\leq
\sum_{k=1}^{\infty}\frac{1}{k^2}<+\infty
$$
and (taking into account the 
${\cal F}_k$-conditional independence of the indicator functions 
$I_{k+1}(i)$ for $i=1,\ldots,N$)
\begin{equation*}
\begin{split}
&\frac{k^2}{(m+k+1)^2} 
E\left[\left(\frac{\sum_{i=1}^N I_{k+1}(i)}{N}-Z_k
\right)^2\,|\,{\mathcal F}_k\right]
\\
&=\frac{k^2}{(m+k+1)^2}
\mathrm{Var}
\left[ \frac{\sum_{i=1}^N I_{k+1}(i)}{N}\,|\,{\mathcal F}_k
\right]\\
&=
\frac{k^2}{(m+k+1)^2}\frac{1}{N^2}\sum_{i=1}^N
\mathrm{Var}[I_{k+1}(i)\,|\,{\mathcal F}_k]
\\
&=
\frac{k^2}{(m+k+1)^2}\frac{1}{N^2}\sum_{i=1}^N 
\left[\alpha Z_k+(1-\alpha)Z_k(i)-(\alpha Z_k+(1-\alpha)Z_k(i))^2\right]
\\
&\stackrel{a.s.}\longrightarrow \frac{1}{N}(Z-Z^2).
\end{split}
\end{equation*}

\subsection{Proof of Theorem \ref{th-legge-Z}}

The first part of Theorem \ref{th-legge-Z} immediately follows from
the relation $E[Z^2]<E[Z]$, that is $E[Z(1-Z)]>0$, (see Lemma 1 in
\cite{daipra-2014}).\\

\indent The second part of Theorem \ref{th-legge-Z} is a consequence
of the almost sure conditional convergence stated in Theorem
\ref{th-media-proporz}. Indeed, if we denote by $K_t$ a version of the
conditional distribution of $\sqrt{t}(Z_t-Z)$ given ${\mathcal F}_t$,
then there exists an event $A$ such that $P(A)=1$ and, for each
$\omega\in A$,
\begin{equation*}
{\textstyle\lim_t} Z_t(\omega)=Z(\omega)\quad\hbox{and}\quad
K_t(\omega)\stackrel{weakly}\longrightarrow 
{\mathcal N}\left(0,\frac{1}{N}(Z(\omega)-Z^2(\omega))\right).
\end{equation*}

Assume now, by absurb, that there exists $z\in (0,1)$ with $P(Z=z)>0$,
and set $A'=A\cap\{Z=z\}$ and define $B_t$ as the ${\mathcal
  F}_t$-measurable random set $\{\sqrt{t}(Z_t-z)\}$. Then $P(A')>0$
and, since $E\left[I_{\{Z=z\}}\,|\,{\mathcal F}_t\right]$ converges
almost surely to $I_{\{Z=z\}}$, there exists an event $A''$ such that
$P(A'')>0$, $A''\underline{\subset} A'$ and, for each $\omega\in A''$, 
\begin{equation*}
K_t(\omega)(B_t(\omega))
\!=\!E\left[I_{\{\sqrt{t}(Z_t-z)\}}\left(\sqrt{t}(Z_t-Z)\right)
\,|\,{\mathcal F}_t\right](\omega)
\!=\!E\left[I_{\{Z=z\}}\,|\,{\mathcal F}_t\right](\omega)
\!\longrightarrow\! 
I_{\{Z=z\}}(\omega)\!=\!1.
\end{equation*}
On the other hand, we observe that $N^{-1}(Z(\omega)-Z^2(\omega))$ is
not zero when $\omega\in A'$. Hence, if $D$ is the discrepancy metric
defined by
$$
D[\mu,\nu]=
{\textstyle\sup_{\{B\in\{\hbox{closed balls of }\R\}\}}} |\mu(B)-\nu(B)|,
$$ 
which metrizes the weak convergence of a sequence of probability
distributions on $\R$ in the case when the limit distribution is
absolutely continuous with respect to the Lebesgue measure on $\R$
(see \cite{gibbs-2002}), then, for each $\omega\in A'$,  we have
\begin{equation*}
\begin{split}
K_t(\omega)(B_t(\omega))&=
\left| 
K_t(\omega)(B_t(\omega))-
{\mathcal N}\left(0,\frac{1}{N}(Z(\omega)-Z^2(\omega))
\right)(B_t(\omega))
\right|
\\
&
\leq  
D\!\left[
K_t(\omega),
{\mathcal N}\left(0, \frac{1}{N}(Z(\omega)-Z^2(\omega))\right)
\right]
\longrightarrow 0.
\end{split}
\end{equation*}
This contradicts the previous fact and the proof is concluded.

\subsection{Proof of Theorems \ref{th-maggiore} and \ref{th-maggiore-Z}}

{\em Proof of Theorem \ref{th-maggiore}.} Let us set
\begin{equation*}
\begin{split}
L_0&=D_0=0\\
L_t&=D_t-\sum_{k=0}^{t-1}\left(E[D_{k+1}|{\cal F}_k]-D_k\right)\\
&=D_t+\alpha\sum_{k=0}^{t-1}\frac{D_k}{m+k+1}\\
&=D_t+\alpha\sum_{k=1}^{t-1}\frac{D_k}{m+k+1}\qquad\hbox{per } t\geq 1.
\end{split}
\end{equation*}
Then $(L_t)$ is an ${\cal F}$-martingale by construction and, for each
$t\geq 1$, we can write
\begin{equation*}
D_{t+1}=\left(1-\frac{\alpha}{m+t+1}\right)D_t+\Delta L_{t+1},
\end{equation*}  
where $\Delta L_{t+1}=L_{t+1}-L_t$. Iterating the above relation and
using the notation in Lemma \ref{lemma-coef}, we obtain
\begin{equation*}
D_{t+1}=c_{1,t}D_1+\sum_{k=1}^t c_{k+1,t}\Delta L_{k+1}.
\end{equation*}

We firstly consider the case $\alpha>1/2$. By Lemma \ref{lemma-coef},
we have $\sqrt{t} c_{1,t}\sim c t^{-(\alpha-1/2)}\to 0$ since
$\alpha>1/2$. Therefore, it is enough to prove the convergence
\begin{equation*}
\sqrt{t}\sum_{k=1}^t c_{k+1,t} \Delta L_{k+1}\stackrel{stably}
\longrightarrow {\mathcal N}\left(0, \frac{V}{2\alpha-1}\right),
\end{equation*}
where $V=(1-1/N)(Z-Z^2)$. To this purpose, let us define
$$
Y_{t,k}=\sqrt{t}\,c_{k+1,t}\Delta L_{k+1}\quad\hbox{and}\quad
{\mathcal G}_{t,k}={\mathcal F}_{k+1}.$$ 
Thus,
$\{Y_{t,k},\, {\cal G}_{t,k}: 1\leq k\leq t\}$ 
is a square-integrable martingale difference array. Indeed,
we have
$$
E[Y_{t,k}^2]<+\infty\quad\hbox{and}\quad 
E[Y_{t,k+1}|{\mathcal G}_{t,k}]=
\sqrt{t}\,c_{k+2,t}E[\Delta L_{k+2}|{\mathcal F}_{k+1}]=0.
$$
Applying Theorem \ref{hall}, the convergence 
$$
\sqrt{t}\sum_{k=1}^t c_{k+1,t} \Delta L_{k+1}=\sum_{k=1}^t Y_{t,k}\stackrel{stably}
\longrightarrow {\mathcal N}\left(0, \frac{V}{2\alpha-1}\right)
$$
is ensured if the following conditions are satisfied:
\begin{itemize}
\item[1)] $\max_{1\leq k\leq t}|Y_{t,k}|\stackrel{P}\longrightarrow 0$;
\item[2)] $E[\max_{1\leq k\leq t}Y_{t,k}^2]$ is bounded in $t$;
\item[3)] $\sum_{k=1}^t Y_{t,k}^2\stackrel{P}\longrightarrow V/(2\alpha-1)$.
\end{itemize}
Hence, in order to conclude the proof, we now verify the above three
conditions.\\

\indent {\sc Proof of condition 1)}. We observe that
\begin{equation*}
\begin{split}
\Delta L_{k+1}&=D_{k+1}-D_k+\alpha\frac{D_k}{m+k+1}\\
&=
(Z_{k+1}(j)-Z_k(j))-(Z_{k+1}-Z_k)+\alpha\frac{D_k}{m+k+1}\\
&
=\frac{I_{k+1}(j)-Z_k(j)}{m+k+1}-\frac{(\sum_{i=1}^N I_{k+1}(i)/N)-Z_k}{m+k+1}+
\alpha\frac{D_k}{m+k+1}
\end{split}
\end{equation*}
and so $|\Delta L_{k+1}|=O(k^{-1})$. Therefore, condition 1) easily
follows since\footnote{We use the notation $a_t\simeq b_t$ when $\lim_t
  a_t=\lim_t b_t$.}
\begin{equation*}
({\textstyle\max_{1\leq k\leq t}}|Y_{t,k}|)^3\leq
\sum_{k=1}^{t-1}|Y_{t,k}|^3+|Y_{t,t}|^3
\simeq   
\frac{1}{t^{3\alpha-3/2}} 
\sum_{k=1}^{t-1}\frac{k^3 O(k^{-3})}{k^{1-(3\alpha-2)}}  
+ \frac{t^3 O(t^{-3})}{t^{3/2}}\longrightarrow 0.
\end{equation*}

\indent {\sc Proof of condition 2)}. We have  
\begin{equation*}
\begin{split}
E\left[{\textstyle\max_{1\leq k\leq t}}Y_{t,k}^2\right]
&\leq\sum_{k=1}^t E[Y_{t,k}^2] 
=
t\sum_{k=1}^t c_{k+1,t}^2E[(\Delta L_{k+1})^2]\\
&=
t\sum_{k=1}^{t-1} c_{k+1,t}^2E[(\Delta L_{k+1})^2]+t E[(\Delta L_{t+1})^2]\\
&\simeq 
\frac{1}{t^{2\alpha-1}}
\sum_{k=1}^{t-1} \frac{ k^2 O(k^{-2})}{k^{1-(2\alpha-1)}}+
\frac{t^{2} O(t^{-2})}{t}.
\end{split}
\end{equation*}
The last term is bounded in $t$ since 
$$
\sum_{k=1}^{t-1} \frac{1}{k^{1-(2\alpha-1)}}
\sim \frac{t^{2\alpha-1}}{2\alpha-1}.
$$

\indent{\sc Proof of condition 3)}. We observe that
$$
\sum_{k=1}^t Y_{t,k}^2=t\sum_{k=1}^t c_{k+1,t}^2(\Delta L_{k+1})^2
\simeq \frac{1}{t^{2\alpha-1}}
\sum_{k=1}^{t-1} \frac{(k\Delta L_{k+1})^2}{k^{1-(2\alpha-1)}}
+ \frac{(t\Delta L_{t+1})^2}{t}.
$$
Moreover, we have
\begin{equation*}
\begin{split}
(\Delta L_{k+1})^2&=
(Z_{k+1}(j)-Z_k(j))^2+(Z_{k+1}-Z_k)^2+\alpha^2\frac{D_k^2}{(m+k+1)^2}\\
&
-2(Z_{k+1}(j)-Z_k(j))(Z_{k+1}-Z_k)+2\alpha\frac{(Z_{k+1}(j)-Z_k(j))D_k}{m+k+1}
-2\alpha\frac{(Z_{k+1}-Z_k)D_k}{m+k+1}\\
&=(Z_{k+1}(j)-Z_k(j))^2+(Z_{k+1}-Z_k)^2-2(Z_{k+1}(j)-Z_k(j))(Z_{k+1}-Z_k)\\
&+\alpha^2\frac{D_k^2}{(m+k+1)^2}
+2\alpha\frac{(I_{k+1}(j)-Z_k(j))D_k}{(m+k+1)^2}
-2\alpha\frac{(\sum_{i=1}^N I_{k+1}(i)/N-Z_k)D_k}{(m+k+1)^2}.
\end{split}
\end{equation*}
Since $D_k\stackrel{a.s.}\longrightarrow 0$, it is easy to get that
each of the three terms
\begin{multline*}
 \frac{1}{t^{2\alpha-1}}
\!\sum_{k=1}^{t-1} \frac{k^2D_k^2}{(m+k+1)^2k^{1-(2\alpha-1)}},
\\
\frac{1}{t^{2\alpha-1}}
\!\sum_{k=1}^{t-1}
\frac{k^2(I_{k+1}(j)-Z_k(j))D_k}{(m+k+1)^2 k^{1-(2\alpha-1)}},
\\
\frac{1}{t^{2\alpha-1}}
\!\sum_{k=1}^{t-1}
\frac{k^2(\sum_{i=1}^N I_{k+1}(i)/N-Z_k)D_k}{(m+k+1)^2 k^{1-(2\alpha-1)}}
\end{multline*}
converges almost surely to zero.  Hence, setting
$a_k=k^{2-2\alpha}$, $b_k=k^{2\alpha-1}$, ${\mathcal G}_{k}={\mathcal F}_{k+1}$ and
$$
Y_k=k^2[(Z_{k+1}(j)-Z_k(j))^2+(Z_{k+1}-Z_k)^2-2(Z_{k+1}(j)-Z_k(j))(Z_{k+1}-Z_k)],
$$
by Lemma \ref{lemma-kro}, condition 3) is satisfied if
$$
\sum_{k=1}^{\infty}\frac{E[Y_k^2]}{k^2}<+\infty\quad\hbox{and}\quad
E[Y_k\,|\,{\mathcal F}_k]\stackrel{a.s.}\longrightarrow V.
$$  
The first condition is trivialy satisfied since
$Y_k^2=k^4 O(k^{-4})$.  As concerns the second one, we have already
verified in the proof of Theorem \ref{th-media-proporz} that
$$ 
E\left[k^2(Z_{k+1}-Z_k)^2\,|\,{\mathcal F}_k\right]
=\frac{k^2}{(m+k+1)^2}
E\left[\left(\frac{\sum_{i=1}^N I_{k+1}(i)}{N}-Z_k\right)^2
\,|\,{\mathcal F}_k\right]
\stackrel{a.s.}\longrightarrow
  \frac{1}{N}\left(Z-Z^2\right).
$$
Further, we can check that
$$
E\left[k^2(Z_{k+1}(j)-Z_k(j))^2\,|\,{\mathcal F}_k\right]
=
\frac{k^2}{(m+k+1)^2}
E\left[\left(I_{k+1}(j)-Z_k(j)\right)^2\,|\,{\mathcal F}_k\right]
\stackrel{a.s.}\longrightarrow (Z-Z^2).
$$
Finally, we observe that
\begin{equation*}
\begin{split}
&E\!\left[k^2(Z_{k+1}(j)-Z_k(j))(Z_{k+1}-Z_k)\,|\,{\mathcal F}_k\right]=\\
&
\frac{k^2}{(m+k+1)^2}E\!\!\left[(I_{k+1}(j)-Z_k(j))
\left(\frac{\sum_{i=1}^N I_{k+1}(i)}{N}-Z_k\right)\,|\,{\mathcal F}_k\!\right]=\\
&
\frac{k^2}{(m+k+1)^2}E\!\!\left[
\frac{I_{k+1}(j)}{N}+\frac{\sum_{i=1,i\neq j}^N I_{k+1}(i)I_{k+1}(j)}{N}
-Z_kI_{k+1}(j)
-Z_k(j)\frac{\sum_{i=1}^N I_{k+1}(i)}{N}
+Z_kZ_k(j)\,|\,{\mathcal F}_k\!\right]
\\
&\stackrel{a.s.}\longrightarrow
Z/N+Z^2-Z^2/N-Z^2-Z^2+Z^2=Z/N-Z^2/N=\frac{1}{N}(Z-Z^2).
\end{split}
\end{equation*}
The proof of the case $\alpha>1/2$ is so concluded.\\

\indent Finally, the proof of the case $\alpha=1/2$ is essentially the
same. Indeed, we have \\ $(\sqrt{t}/\sqrt{\ln(t)}) c_{1,t}\sim
c (\ln(t))^{-1/2}\to 0$ and so we can continue the argument as above by
setting
$$
Y_{t,k}=\frac{\sqrt{t}}{\sqrt{\ln(t)}}c_{k+1,t}\Delta L_{k+1}.
$$
For this purpose, we observe that
\begin{equation*}
({\textstyle\max_{1\leq k\leq t}}|Y_{t,k}|)^3\leq
\sum_{k=1}^{t-1}|Y_{t,k}|^3+|Y_{t,t}|^3
\simeq   
\frac{1}{\ln(t)\sqrt{\ln(t)}} 
\sum_{k=1}^{t-1}\frac{k^3 O(k^{-3})}{k^{1+1/2}}   
+ \frac{t^3 O(t^{-3})}{\ln(t)\sqrt{\ln(t)}\,t^{3/2}}\longrightarrow 0
\end{equation*}
and 
\begin{equation*}
\begin{split}
E\left[{\textstyle\max_{1\leq k\leq t}}Y_{t,k}^2\right]
\leq 
\frac{1}{\ln(t)}\sum_{k=1}^{t-1} \frac{k^2 O(k^{-2})}{k}+
\frac{t^{2} O(t^{-2})}{t\ln(t)} 
\end{split}
\end{equation*}
with 
$$
\sum_{k=1}^{t-1} \frac{1}{k}
\sim \ln(t).
$$ 

Finally, we apply Lemma \ref{lemma-kro} with $a_k=k$,
$b_k=\ln(k)$ and ${\mathcal G}_k={\mathcal F}_{k+1}$, in order to
prove that
$$
\sum_{k=1}^t Y_{t,k}^2=\frac{t}{\ln(t)}\sum_{k=1}^t c_{k+1,t}(\Delta L_{k+1})^2
\simeq \frac{1}{\ln(t)}
\sum_{k=1}^{t-1} \frac{(k\Delta L_{k+1})^2}{k}
\stackrel{a.s.}\longrightarrow V.
$$
\bigskip

{\em Proof of Theorem \ref{th-maggiore-Z}.}\\
 
{\sc Case $1/2<\alpha\leq 1$.} Since we can write 
$$
\sqrt{t}(Z_t(j)-Z)=\sqrt{t}(Z_t(j)-Z_t)+\sqrt{t}(Z_t-Z)\,,
$$ 

the stated convergence follows from Theorem \ref{th-media-proporz},
Theorem \ref{th-maggiore} and Proposition \ref{blocco}. Indeed,
$\sqrt{t}(Z_t(j)-Z_t)$ is ${\mathcal F}_t$-measurable and it converges
stably and $\sqrt{t}(Z_t-Z)$ is $\bigvee_t{\mathcal F}_t$-measurable
and it converges in the sense of
the almost sure conditional convergence (and so in the sense of the
strong stable convergence) with respect to $\mathcal F$.\\

\indent
{\sc Case $\alpha=1/2$.} Since we can write 
$$
\frac{\sqrt{t}}{\sqrt{\ln(t)}}(Z_t(j)-Z)=
\frac{\sqrt{t}}{\sqrt{\ln(t)}}(Z_t(j)-Z_t)+
\frac{\sqrt{t}(Z_t-Z)}{\sqrt{\ln(t)}}\,,
$$ 

the stated convergence follows from Theorem \ref{th-media-proporz} and
Theorem \ref{th-maggiore}. Indeed, the first term converges stably to
the desidered Gaussian kernel and the second one converges in
probability to zero.

\subsection{Proof of Theorem \ref{th-minore}}

First we prove a general property.  Let us fix $j\in\{1,\ldots, N\}$
and set
$$
{\widetilde D}_t=t^{\alpha} D_t=t^{\alpha} (Z_t(j)-Z_t).
$$

\begin{lemma}\label{prop-gen}
For any $\alpha\in [0,1]$, the sequence $({\widetilde D}_t)_t$ is an $\cal
  F$-quasi-martingale (an $\cal F$-martingale when $\alpha=0$).
\end{lemma}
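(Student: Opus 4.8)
The plan is to verify directly the definition of an $\mathcal{F}$-quasi-martingale: $(\widetilde{D}_t)$ must be integrable and adapted, and its total conditional mean variation
$$
\sum_{t\geq 0} E\Big[\,\big|E[\widetilde{D}_{t+1}-\widetilde{D}_t\mid\mathcal{F}_t]\big|\,\Big]
$$
must be finite. Adaptedness is clear, and integrability is immediate since $|D_t|\leq 1$ gives $E|\widetilde{D}_t|=t^{\alpha}E|D_t|\leq t^{\alpha}<+\infty$ for every fixed $t$. So the whole content of the statement is the summability of the conditional increments in $L^1$.

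First I would compute the one-step conditional increment in closed form. Since $(Z_t)$ is an $\mathcal{F}$-martingale and, by the relation recalled in Section~\ref{prelimin}, $E[Z_{t+1}(j)\mid\mathcal{F}_t]-Z_t(j)=-\alpha D_t/(m+t+1)$, subtracting yields
$$
E[D_{t+1}\mid\mathcal{F}_t]=D_t\Big(1-\frac{\alpha}{m+t+1}\Big).
$$
Multiplying by $(t+1)^{\alpha}$ and subtracting $\widetilde{D}_t=t^{\alpha}D_t$ gives
$$
E[\widetilde{D}_{t+1}-\widetilde{D}_t\mid\mathcal{F}_t]=\beta_t\,D_t,
\qquad
\beta_t:=(t+1)^{\alpha}\Big(1-\frac{\alpha}{m+t+1}\Big)-t^{\alpha}.
$$
When $\alpha=0$ one has $\beta_t\equiv 0$, so the conditional increment vanishes and $(\widetilde{D}_t)=(D_t)$ is a genuine $\mathcal{F}$-martingale; this settles the parenthetical assertion.

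The analytic core is the estimate of $\beta_t$ for $\alpha\in(0,1]$. Expanding $(t+1)^{\alpha}=t^{\alpha}+\alpha t^{\alpha-1}+O(t^{\alpha-2})$ and $\alpha/(m+t+1)=\alpha/t+O(t^{-2})$, the two candidate leading contributions of order $t^{\alpha-1}$, coming respectively from $(t+1)^{\alpha}-t^{\alpha}\sim\alpha t^{\alpha-1}$ and from $(t+1)^{\alpha}\alpha/(m+t+1)\sim\alpha t^{\alpha-1}$, cancel exactly, so that $\beta_t=O(t^{\alpha-2})$. This cancellation is exactly the reason why the exponent $\alpha$ is the correct scaling: it is tuned so that $t^{\alpha}D_t$ is ``almost'' a martingale.

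Finally I would bound the mean variation via $E|D_t|\leq(E[D_t^2])^{1/2}$ together with the sharp $L^2$-rates \eqref{rel-4}, which give $E|D_t|=O(t^{-\alpha})$ for $\alpha<1/2$, $O(t^{-1/2}\sqrt{\ln t})$ for $\alpha=1/2$, and $O(t^{-1/2})$ for $\alpha>1/2$. Combined with $|\beta_t|=O(t^{\alpha-2})$ and the fact that $\beta_t$ is deterministic, the general term $E[|\beta_t D_t|]=|\beta_t|\,E|D_t|$ is $O(t^{-2})$ for $\alpha<1/2$, $O(t^{-2}\sqrt{\ln t})$ for $\alpha=1/2$, and $O(t^{\alpha-5/2})$ with $\alpha-5/2\leq-3/2$ for $1/2<\alpha\leq 1$; in all three regimes the series converges, which completes the verification. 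The main obstacle is precisely the delicate cancellation yielding $\beta_t=O(t^{\alpha-2})$ in place of the naive $O(t^{\alpha-1})$: with only the weaker bound, the term for $\alpha<1/2$ would be of order $t^{-1}$ and the series would diverge, so this Taylor estimate has to be carried out with care.
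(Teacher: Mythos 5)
Your proof is correct and follows essentially the same route as the paper: you compute the one-step conditional increment of $\widetilde{D}_t=t^{\alpha}D_t$, exploit the cancellation between $(t+1)^{\alpha}-t^{\alpha}\sim\alpha t^{\alpha-1}$ and the drift term $(t+1)^{\alpha}\alpha/(m+t+1)$ to get a deterministic coefficient of order $O(t^{\alpha-2})$, and then verify summability of the mean conditional variation. The only immaterial difference is that the paper bounds $E[\,|D_t|\,]$ by a constant for $0<\alpha<1$ and invokes the $L^2$-rate \eqref{rel-4} only in the case $\alpha=1$, whereas you use the sharp rates in all three regimes.
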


{\em Proof.} The statement is trivial when $\alpha=0$ 
(since we have $N$ independent P\'olya's urns). 
Therefore, let us assume $\alpha>0$. We observe that
\begin{equation*}
\begin{split}
E[{\widetilde D}_{t+1}|{\cal F}_t]-{\widetilde D}_t&=
\left[
\left(1+\frac{1}{t}\right)^{\alpha}
-1
\right]{\widetilde D}_t
-
\left(1+\frac{1}{t}\right)^{\alpha}\frac{\alpha}{m+t+1}{\widetilde D}_t\\
&=\left[\frac{\alpha}{t}+O(1/t^2)\right]{\widetilde D}_t
-
\left[\frac{\alpha}{m+t+1}+O(1/t^2)\right]{\widetilde D}_t\\
&=
\left[\frac{m+1}{t(m+t+1)}+O(1/t^2)\right]\alpha {\widetilde D}_t\\
&=O(1/t^2)\alpha {\widetilde D}_t.
\end{split}
\end{equation*}
Hence, the sequence $({\widetilde D}_t)_t$ is an $\cal F$-quasimartingale if 
$$
{\textstyle\sum_{t=1}^{+\infty}} \frac{E[\,|D_t|\,]}{t^{1+(1-\alpha)}}<+\infty.
$$ 

This last condition is obviously true when $0<\alpha<1$ (since $(D_t)_t$
is uniformly bounded). Moreover it is also satisfied when $\alpha=1$
since (\ref{rel-4}) implies $E[\,|D_t]\,]=O(t^{-1/2})$.\\
%\end{proof}

{\em Proof of Theorem \ref{th-minore}.} By Lemma \ref{prop-gen}, the
sequence $({\widetilde D}_t)_t$ is an $\cal F$-quasi-martingale.
Moreover, by (\ref{rel-4}), we have $\sup_t E[{\widetilde
    D}_t^2]<+\infty$ and so it converges a.s. and in mean to some real
random variable $\widetilde D$.

\indent In order to prove that $P({\widetilde D}\neq 0)>0$, we will prove that
$({\widetilde D}_t^2)_t$ is bounded in $L^{p}$ for a suitable
$p>1$. Indeed, this fact implies that ${\widetilde D}_t^2$ converges in mean
to ${\widetilde D}^2$ and so, by (\ref{rel-4}), we obtain
$$
E[\,{\widetilde D}^2\,]={\textstyle\lim_t} E[\,{\widetilde D}_t^2\,]=
{\textstyle\lim_t} t^{2\alpha} E[\,D_t^2\,]>0.
$$
To this purpose, we set $p=1+\epsilon/2$, with $\epsilon>0$ 
and $x_t=E[\,|D_t|^{2+\epsilon}\,]$. We 
recall that the following recursive equation holds: 

$$
D_{t+1}=\frac{m+t}{m+t+1}D_t+\frac{1}{m+t+1}\left[I_{t+1}(j)-
\frac{\sum_{i=1}^N I_{t+1}(i)}{N}\right].
$$ 

Then we can write 
\begin{equation*}
\begin{split}
x_{t+1}&=
\left(\frac{m+t}{m+t+1}\right)^{2+\epsilon} E\big[\,|D_t|^{2+\epsilon}\,\big]\\
&+(2+\epsilon)\left(\frac{m+t}{m+t+1}\right)^{1+\epsilon}
 \frac{1}{m+t+1}
E\left[\, |D_t|^{1+\epsilon} \mathrm{sgn}(D_t) 
\left(I_{t+1}(j)-
\frac{\sum_{i=1}^N I_{t+1}(i)}{N} \right) \right]\\
&+R_t
\end{split}
\end{equation*}

where $R_t=O(t^{-2})$. Now, since 
$E\left[I_{t+1}(j)-\frac{\sum_{i=1}^N I_{t+1}(i)}{N}\Big|\mathcal{F}_t \right]= 
(1-\alpha) D_t$, we have 

\begin{equation*}
\begin{split}
x_{t+1}&=
\left(\frac{m+t}{m+t+1}\right)^{2+\epsilon}E\big[\,|D_t|^{2+\epsilon}\,\big]\\
&+
(2+\epsilon)\left(\frac{m+t}{m+t+1}\right)^{1+\epsilon}
\frac{(1-\alpha)}{m+t+1}
E\left[\, |D_t|^{1+\epsilon}\mathrm{sgn}(D_t)D_t \right]+R_t
\\
&= \left[\left(\frac{m+t}{m+t+1}\right)^{2+\epsilon}+(2+\epsilon)(1-\alpha)
\frac{(m+t)^{1+\epsilon}}{(m+t+1)^{2+\epsilon}}\right]
E\left[\, |D_t|^{2+\epsilon}\,\right]
+R_t\\
&=\left(\frac{m+t}{m+t+1}\right)^{1+\epsilon}
\left[1+\frac{(1+\epsilon)}{m+t+1}-\frac{\alpha(2+\epsilon)}{m+t+1}\right]
x_t+R_t
\\
&= 
\left(1-\frac{\alpha(2+\epsilon)}{m+t+1} \right) x_t+ g(t)
\end{split}
\end{equation*}
with $g(t)=O(t^{-2})$, where, for the last equality we have used the
elementary property
$$
\left(\frac{m+t}{m+t+1}\right)^{1+\epsilon}=
\left(1-\frac{1}{m+t+1}\right)^{1+\epsilon}=
1-\frac{1+\epsilon}{m+t+1}+O(1/t^2).
$$ 

Therefore, we have that $x_t$ satisfies the difference equation
$$
x_0=0\qquad x_{t+1}=
\left(1-\frac{\alpha(2+\epsilon)}{m+t+1} \right) x_t+ g(t).
$$ 
Since, for $\epsilon>0$ sufficiently small, we have
$\alpha(2+\epsilon)<1$ and as $t\to +\infty$
\begin{equation*}
\begin{split}
\prod_{k=0}^{t-1}
\left(1-\frac{\alpha(2+\epsilon)}{m+k+1} \right)
&=\exp\left[
\sum_{k=0}^{t-1}\ln\left(1-\frac{\alpha(2+\epsilon)}{m+k+1}\right)
\right]\\
&=\exp\left[
-\alpha(2+\epsilon)\sum_{k=0}^{t-1}\frac{1}{m+k+1}+r_1(m+t)
\right]\\
&=(m+t)^{-\alpha(2+\epsilon)}r_2(m+t)
\end{split}
\end{equation*} 
with $\lim_t r_2(m+t)\in (0,+\infty)$, by means of Lemma
\ref{solution}, we finally obtain that there exists $\epsilon>0$ such
that
$$
E\big[\,|D_t|^{2+\epsilon}\,\big]=O\left(\frac{1}{t^{\alpha(2+\epsilon)}}\right)
$$
and so 
$$
\textstyle{\sup_t} E\big[\,|{\widetilde D}_t|^{2+\epsilon}\,\big]=
{\textstyle\sup_t}\, t^{\alpha(2+\epsilon)} 
E\big[\,|D_t|^{2+\epsilon}\,\big]<+\infty.
$$
\\[20pt]

\noindent {\bf Acknowledgement.} \\

\noindent 
Paolo Dai Pra and Ida G. Minelli thank Pierre-Yves Louis and Neeraja
Sahasrabudhe for interesting discussions on the topic of this paper.

Irene Crimaldi is a member of the ``Gruppo Nazionale per l'Analisi
Matematica, la Pro\-ba\-bi\-li\-t\`a e le loro Applicazioni (GNAMPA)''
of the ``Istituto Nazionale di Alta Matematica (INdAM)''. Moreover,
she acknowledges support from CNR PNR Project ``CRISIS Lab''.

\appendix

\section{Appendix: Some auxiliary results}

The following lemma slightly generalizes Lemma 2 in
\cite{ber-cri-pra-rig-2011}.

\begin{lemma}\label{lemma-kro}
Let $\cal G$ be an (increasing) filtration and $(Y_k)$ be an $\cal
G$-adapted sequence of real random variables such that
$E[Y_{k}|{\mathcal G}_{k-1}]\to Y$ a.s. for some real random variable
$Y$. Moreover, let $(a_k)$ and $(b_k)$ be two sequences of strictly
positive real numbers such that
$$ b_k\uparrow +\infty,\quad
\sum_{k=1}^{\infty}\frac{E[Y_k^2]}{a_k^2b_k^2}<+\infty.
$$
Then we have:
\begin{itemize}
\item[a)] If $\frac{1}{b_t}\sum_{k=1}^t\frac{1}{a_k}\to \gamma$
for some constant $\gamma$, then 
$
\frac{1}{b_t}\sum_{k=1}^t \frac{Y_k}{a_k}\stackrel{a.s.}
\longrightarrow \gamma Y.
$
\item[b)] If $b_t\sum_{k\geq t} \frac{1}{a_kb_k^2}\to \gamma$
for some constant $\gamma$, then 
$
b_t\sum_{k\geq t} \frac{Y_k}{a_kb_k^2}\stackrel{a.s.}
\longrightarrow \gamma Y.
$
\end{itemize}
\end{lemma}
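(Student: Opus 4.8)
The plan is to split each $Y_k$ into its $\mathcal G$-martingale-difference part and its predictable part, treating the former through an $L^2$-martingale convergence argument combined with (a tail version of) Kronecker's lemma, and the latter through an elementary Toeplitz/Ces\`aro estimate. Concretely, I set $\Delta_k = Y_k - E[Y_k\,|\,\mathcal G_{k-1}]$ and $c_k = E[Y_k\,|\,\mathcal G_{k-1}]$, so that $Y_k = \Delta_k + c_k$, the sequence $(\Delta_k)$ is a $\mathcal G$-martingale difference, and $c_k \to Y$ a.s.\ by hypothesis. The standing assumption $\sum_k E[Y_k^2]/(a_k^2 b_k^2) < +\infty$ forces $E[Y_k^2] < +\infty$ for every $k$, hence $\Delta_k \in L^2$ with $E[\Delta_k^2] \le E[Y_k^2]$. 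The core of the proof is the claim that the series $M_t := \sum_{k=1}^t \Delta_k/(a_k b_k)$ converges a.s.\ as $t \to +\infty$: since martingale differences are orthogonal in $L^2$, one has $E[M_t^2] = \sum_{k=1}^t E[\Delta_k^2]/(a_k^2 b_k^2) \le \sum_{k=1}^{\infty} E[Y_k^2]/(a_k^2 b_k^2) < +\infty$, so $(M_t)$ is an $L^2$-bounded $\mathcal G$-martingale and converges a.s.\ to a finite limit $M_\infty$.

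For part a) I write $\frac{1}{b_t}\sum_{k=1}^t Y_k/a_k = \frac{1}{b_t}\sum_{k=1}^t \Delta_k/a_k + \frac{1}{b_t}\sum_{k=1}^t c_k/a_k$. To the first sum I apply Kronecker's lemma: since $b_t \uparrow +\infty$ and $\sum_k \Delta_k/(a_k b_k)$ converges a.s., it follows that $\frac{1}{b_t}\sum_{k=1}^t b_k\cdot\frac{\Delta_k}{a_k b_k} = \frac{1}{b_t}\sum_{k=1}^t \Delta_k/a_k \to 0$ a.s. For the second sum I decompose $c_k = Y + \epsilon_k$ with $\epsilon_k \to 0$ a.s.; the term $Y\cdot\frac{1}{b_t}\sum_{k=1}^t 1/a_k \to \gamma Y$ by hypothesis, while a standard Toeplitz estimate shows $\frac{1}{b_t}\sum_{k=1}^t \epsilon_k/a_k \to 0$ a.s.\ (given $\delta>0$ choose $K$ with $|\epsilon_k|<\delta$ for $k>K$; the head $\frac{1}{b_t}\sum_{k\le K}|\epsilon_k|/a_k \to 0$ since $b_t\to+\infty$, and the tail is bounded by $\delta\cdot\frac{1}{b_t}\sum_{k\le t}1/a_k \to \delta\gamma$, so the $\limsup$ is at most $\delta\gamma$ for every $\delta$). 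Adding the two contributions gives the limit $\gamma Y$.

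For part b) the same decomposition gives $b_t\sum_{k\ge t} Y_k/(a_k b_k^2) = b_t\sum_{k\ge t}\Delta_k/(a_k b_k^2) + b_t\sum_{k\ge t}c_k/(a_k b_k^2)$. The predictable part is treated exactly as in a), now using $b_t\sum_{k\ge t}1/(a_k b_k^2)\to\gamma$ and $c_k\to Y$. The martingale part is where the genuine work lies, and I expect it to be the main obstacle, since here I need a \emph{tail} analogue of Kronecker's lemma rather than the classical one. Writing $u_k = \Delta_k/(a_k b_k)$, so that $R_t := \sum_{k\ge t}u_k = M_\infty - M_{t-1}\to 0$ a.s., and performing Abel summation with $u_k = R_k - R_{k+1}$, I obtain $b_t\sum_{k\ge t}\Delta_k/(a_k b_k^2) = b_t\sum_{k\ge t}u_k/b_k = R_t - b_t\sum_{k>t}R_k\bigl(b_{k-1}^{-1}-b_k^{-1}\bigr)$. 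Since the nonnegative weights $b_{k-1}^{-1}-b_k^{-1}$ telescope to $\sum_{k>t}\bigl(b_{k-1}^{-1}-b_k^{-1}\bigr)=b_t^{-1}$ (using $b_k\to+\infty$), the second term is bounded in absolute value by $\sup_{k>t}|R_k|\to 0$, and $R_t\to 0$ as well, so the whole expression tends to $0$ a.s. Combining this with the predictable part yields the claimed limit $\gamma Y$, completing the plan.
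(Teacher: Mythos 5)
Your proof is correct and follows essentially the same route as the paper's: the same decomposition into the martingale part $Y_k-E[Y_k\,|\,\mathcal G_{k-1}]$ and the predictable part, the same $L^2$-bounded martingale $M_t=\sum_{k=1}^t (Y_k-E[Y_k\,|\,\mathcal G_{k-1}])/(a_kb_k)$, Kronecker's lemma for part a) and Abel summation for part b), plus a Toeplitz-type estimate for the conditional-expectation part. The only differences are cosmetic: you use orthogonality of martingale differences to get a sharper variance bound (the paper settles for a factor $4$), and you spell out the Toeplitz and tail-Abel steps that the paper invokes implicitly or by name.
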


\begin{proof}
Let us set 
$$M_t=\sum_{k=1}^t \frac{Y_k-E[Y_k|{\mathcal G}_{k-1}]}{a_kb_k}.$$
Then $(M_t)$ is a $\cal G$-martingale such that
$$
{\textstyle\sup_t} E[M_t^2]\leq 
4\sum_{k=1}^{\infty}\frac{E[Y_k^2]}{a_k^2b_k^2}<+\infty.
$$
Therefore $(M_t)$ converges almost surely and, by Kronecker's lemma, we get
$$
\frac{1}{b_t}\sum_{k=1}^t b_k \frac{Y_k-E[Y_k|{\mathcal G}_{k-1}]}{a_kb_k}
\stackrel{a.s.}\longrightarrow 0.
$$
Moreover, by Abel's lemma, we have
$$
b_t\sum_{k\geq t} \frac{Y_k-E[Y_k|{\mathcal G}_{k-1}]}{a_kb_k^2}
\stackrel{a.s.}\longrightarrow 0.
$$
This is sufficient in order to conclude since,  in case a), we have
\begin{equation*}
\frac{1}{b_t}\sum_{k=1}^t \frac{Y_k}{a_k}=
\frac{1}{b_t}\sum_{k=1}^t \frac{E[Y_k|{\mathcal G}_{k-1}]}{a_k}+
\frac{1}{b_t}\sum_{k=1}^t b_k \frac{Y_k-E[Y_k|{\mathcal G}_{k-1}]}{a_kb_k}
\stackrel{a.s.}\longrightarrow \gamma Y
\end{equation*}
and, in case b), we have
\begin{equation*}
b_t\sum_{k\geq t} \frac{Y_k}{a_kb_k^2}=
b_t\sum_{k\geq t} \frac{E[Y_k|{\mathcal G}_{k-1}]}{a_kb_k^2}+
b_t\sum_{k\geq t} \frac{Y_k-E[Y_k|{\mathcal G}_{k-1}]}{a_kb_k^2}
\stackrel{a.s.}\longrightarrow \gamma Y.
\end{equation*}
\end{proof}

Finally, we have the following technical results.

\begin{lemma}\label{lemma-coef}
Fix $\alpha\in (0,1]$ and set 
\begin{equation*}
c_{k,t}=
\begin{cases}
&\prod_{h=k}^t\left(1-\frac{\alpha}{m+h+1}\right)=
\frac{\prod_{h=1}^{m+t}\left(1-\frac{\alpha}{h+1}\right)}
{\prod_{h=1}^{m+k-1}\left(1-\frac{\alpha}{h+1}\right)}
\quad\hbox{for } 1\leq k\leq t\\
&1\quad\hbox{for } k=t+1.
\end{cases}
\end{equation*}
Then 
$$
c_{1,t}\sim c t^{-\alpha}\;\hbox{with } c\in(0,+\infty)
\quad\hbox{and}\quad
{\textstyle\lim_{k\rightarrow +\infty}\sup_{t\geq k}}
\left|\frac{c_{k,t}}{\left(\frac{k}{t}\right)^\alpha}-1\right|=0.
$$
\end{lemma}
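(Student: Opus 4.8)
The plan is to take logarithms and reduce everything to the asymptotics of partial sums of the harmonic series, with all error terms controlled uniformly. Since $m\geq 2$ and $\alpha\in(0,1]$, every factor $1-\frac{\alpha}{m+h+1}$ lies in $(0,1)$, so the products are strictly positive and their logarithms are well defined. Rewriting the product in the shifted index $g=m+h$ gives $c_{k,t}=\prod_{g=m+k}^{m+t}\bigl(1-\frac{\alpha}{g+1}\bigr)$, consistent with the $\Gamma$-type factorization already recorded in the statement.

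For the first assertion I would expand $\ln c_{1,t}=\sum_{h=1}^t\ln\bigl(1-\frac{\alpha}{m+h+1}\bigr)$ using $\ln(1-x)=-x+O(x^2)$, obtaining $\ln c_{1,t}=-\alpha\sum_{h=1}^t\frac{1}{m+h+1}+\sum_{h=1}^tO(h^{-2})$. The second sum converges to a finite constant, while the harmonic partial sum satisfies $\sum_{h=1}^t\frac{1}{m+h+1}=\ln t+C_0+o(1)$ for a constant $C_0$ depending only on $m$ (explicitly $C_0=\gamma-\sum_{\ell=1}^{m+1}\ell^{-1}$, with $\gamma$ Euler's constant). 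Hence $\ln c_{1,t}=-\alpha\ln t+C+o(1)$ for some finite $C$, and exponentiating gives $c_{1,t}\sim c\,t^{-\alpha}$ with $c=e^{C}\in(0,+\infty)$. Equivalently, the $\Gamma$-factorization yields $c_{1,t}=\frac{\Gamma(m+2)}{\Gamma(m+2-\alpha)}\,\frac{\Gamma(m+t+2-\alpha)}{\Gamma(m+t+2)}$, and the classical ratio asymptotic $\Gamma(x+a)/\Gamma(x+b)\sim x^{a-b}$ identifies $c=\Gamma(m+2)/\Gamma(m+2-\alpha)$.

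For the second assertion I would aim directly at $\ln\bigl(c_{k,t}/(k/t)^\alpha\bigr)$ and show it is $O(1/k)$ uniformly in $t\geq k$. The same expansion gives $\ln c_{k,t}=-\alpha\sum_{g=m+k}^{m+t}\frac{1}{g+1}+O(1/k)$, where the remainder comes from the tail $\sum_{g\geq m+k}O(g^{-2})$ and is uniform in $t$. Applying the uniform estimate $\sum_{\ell=a}^{b}\ell^{-1}=\ln(b/a)+O(1/a)$, valid for all $b\geq a$, the harmonic sum equals $\ln\frac{m+t+1}{m+k+1}+O(1/k)$. Subtracting $\alpha\ln(k/t)$ then leaves $\alpha\ln\frac{1+(m+1)/k}{1+(m+1)/t}+O(1/k)$; for $t\geq k$ this ratio lies between $1$ and $1+(m+1)/k$, so its logarithm is itself $O(1/k)$ uniformly in $t$. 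Thus $\ln\bigl(c_{k,t}/(k/t)^\alpha\bigr)=O(1/k)$ uniformly in $t\geq k$, whence $\sup_{t\geq k}\bigl|c_{k,t}/(k/t)^\alpha-1\bigr|=O(1/k)\to0$.

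The only delicate point is the uniformity over $t\geq k$ in the second part: it does not suffice that each tail error tends to zero, one must check that the bounds depend only on the smaller index $k$ and not on $t$. This is exactly what the estimate $\sum_{\ell=a}^{b}\ell^{-1}=\ln(b/a)+O(1/a)$ secures, its error being governed by the lower endpoint $a$ rather than the upper endpoint $b$. This is the main thing to verify with care, while the rest is a routine first-order expansion.
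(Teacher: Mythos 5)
Your proof is correct and follows essentially the same route as the paper's: take logarithms, expand $\ln(1-x)=-x+O(x^2)$, and use harmonic-sum asymptotics, with uniformity in $t\geq k$ coming from error terms governed by the lower index. The paper packages this slightly more compactly---it shows $\prod_{h=1}^{n}\left(1-\frac{\alpha}{h+1}\right)=n^{-\alpha}g_3(n)$ with $g_3$ converging to a limit in $(0,+\infty)$ and reads both claims off the quotient $c_{k,t}=\frac{(m+t)^{-\alpha}g_3(m+t)}{(m+k-1)^{-\alpha}g_3(m+k-1)}$---but the substance is identical, and your explicit uniform $O(1/k)$ rate and the identification $c=\Gamma(m+2)/\Gamma(m+2-\alpha)$ are harmless extras.
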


\begin{proof}
It is enough to observe that, as $t\to +\infty$, we have
\begin{equation*}
\begin{split}
\prod_{h=1}^{m+t}\left(1-\frac{\alpha}{h+1}\right)&=
\exp\left[ \sum_{h=1}^{m+t}\ln\left(1-\frac{\alpha}{h+1}\right) \right]\\
&=
\exp\left(-\alpha\sum_{h=0}^{m+t}\frac{1}{h+1}+g_1(m+t)\right)\\
&=
\exp\left(-\alpha\ln(m+t)+g_2(m+t)\right)=\\
&=(m+t)^{-\alpha}g_3(m+t)
\end{split}
\end{equation*}
with $\lim_{h\to +\infty} g_3(h)\in (0,+\infty)$.
\end{proof}

\begin{lemma}\label{solution}
The solution of the difference equation 
$$
x_0=0 \qquad x_{t+1}=f(t)x_t+g(t)\quad\hbox{for }t\geq 0,
$$
with $f(t)>0$ for all $t\geq 0$, is given by
$$
x_0=0\qquad x_t=
\prod_{k=0}^{t-1}f(k)\,
\sum_{i=0}^{t-1}\frac{g(i)}{\prod_{k=0}^i f(k)}\quad\hbox{for } t\geq 1.
$$
\end{lemma}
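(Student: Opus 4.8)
The plan is to recognize this as a first-order linear difference equation and solve it by the discrete analogue of the integrating-factor method (a \emph{summation factor}), which is cleaner than a bare induction and also explains where the stated formula comes from. The positivity hypothesis $f(t)>0$ is exactly what is needed to guarantee that the products $\prod_{k=0}^i f(k)$ never vanish, so that dividing by them is legitimate.

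Concretely, I would set $P_t=\prod_{k=0}^{t-1}f(k)$ for $t\geq 1$ and $P_0=1$ (empty product), so that $P_{t+1}=f(t)P_t$ for every $t\geq 0$. Dividing the recurrence $x_{t+1}=f(t)x_t+g(t)$ by $P_{t+1}$ and using $P_{t+1}=f(t)P_t$ gives
$$
\frac{x_{t+1}}{P_{t+1}}=\frac{x_t}{P_t}+\frac{g(t)}{P_{t+1}}.
$$
Thus the normalized sequence $y_t:=x_t/P_t$ satisfies $y_{t+1}-y_t=g(t)/P_{t+1}$, a telescoping first difference. Summing from $0$ to $t-1$ and using $y_0=x_0/P_0=0$ yields
$$
y_t=\sum_{i=0}^{t-1}\frac{g(i)}{P_{i+1}}=\sum_{i=0}^{t-1}\frac{g(i)}{\prod_{k=0}^i f(k)},
$$
and multiplying back by $P_t=\prod_{k=0}^{t-1}f(k)$ recovers exactly the claimed expression for $x_t$.

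Alternatively, the same identity can be verified by a one-line induction on $t$: the base case $t=1$ reduces to $x_1=g(0)$ (matching both the recurrence, since $x_0=0$, and the formula, since $\prod_{k=0}^{0}f(k)\cdot g(0)/\prod_{k=0}^{0}f(k)=g(0)$), while for the inductive step one substitutes the assumed expression for $x_t$ into $x_{t+1}=f(t)x_t+g(t)$, factors out $\prod_{k=0}^{t}f(k)$, and observes that the extra term $g(t)$ is precisely the $i=t$ summand $g(t)/\prod_{k=0}^{t}f(k)$ after that factorization. I do not expect any genuine obstacle here: the argument is elementary and the only points requiring minor care are the empty-product convention $P_0=1$ and keeping the index ranges consistent, since the sum runs up to $i=t-1$ while the product inside the summation factor runs up to $k=i$.
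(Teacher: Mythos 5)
Your proof is correct and follows essentially the same route as the paper: the paper also introduces the normalized sequence $y_t=x_t/\prod_{k=0}^{t-1}f(k)$, observes that it satisfies $y_{t+1}=y_t+g(t)/\prod_{k=0}^{t}f(k)$, and telescopes. Your summation-factor presentation and the remark on the empty-product convention $P_0=1$ are just a slightly more explicit write-up of the identical argument.
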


\begin{proof} If we set $y_0=x_0$ and $y_t=x_t/\prod_{k=0}^{t-1}f(k)$ for 
$t\geq 1$,  then we find that $y_t$ satisfies the difference equation
$$
y_0=0\qquad y_{t+1}=y_t+F(t)\qquad\hbox{where } 
F(t)=\frac{g(t)}{\prod_{k=0}^t f(k)}.
$$ 
Hence, it is easy to verify that $y_t=\sum_{i=0}^{t-1} F(i)$ for
$t\geq 1$ and so the proof is concluded.
\end{proof}

We next state two results without proofs.

\begin{theorem}\label{hall} (Theorem 3.2 in \cite{hall-1980})\\
Let $\{S_{n,k},\, {\cal F}_{n,k}:\, 1\leq k\leq k_n , n\geq 1\}$ be a
zero-mean, square-integrable martingale array with differences $Y_{n,k}$,
and let $\eta^2$ be an a.s. finite random variable.  
Suppose that
\begin{itemize}
\item[1)] $\max_{1\leq k\leq k_n}|Y_{n,k}|\stackrel{P}\longrightarrow 0$;
\item[2)] $E[\max_{1\leq k\leq k_n}Y_{n,k}^2]$ is bounded in $n$;
\item[3)] $\sum_{k=1}^{k_n} Y_{n,k}^2\stackrel{P}\longrightarrow \eta^2$
\end{itemize}
and the $\sigma$-fields are nested, i.e.  ${\cal
  F}_{n,k}\underline\subset{\cal F}_{n+1,k}$ for $1\leq k\leq k_n,\,
n\geq 1$. Then $S_{n,k_n}=\sum_{k=1}^{k_n} Y_{n,k}$ converges stably
to a random variable with characteristic function
$\varphi(u)=E[\exp(-\eta^2 u^2/2)]$, i.e. to the Gaussian kernel ${\mathcal
  N}(0,\eta^2)$.\\
(Here, the symbol ${\cal N}(0,0)$ means the Dirac distribution $\epsilon_0$.)
\end{theorem}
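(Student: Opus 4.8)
The plan is to establish the stable convergence through the characterization by conditionally weighted characteristic functions: since each $x\mapsto e^{iux}$ is bounded and $|e^{iuS_{n,k_n}}|\le 1$, it is enough to prove, for every $u\in\R$ and every $H$ in the field $\bigcup_{m,h}{\cal F}_{m,h}$ (which, thanks to the nesting hypothesis, is directed upward and generates the relevant $\sigma$-algebra), that
\[
E\!\left[\mathbf 1_H\,e^{iuS_{n,k_n}}\right]\ \longrightarrow\ E\!\left[\mathbf 1_H\,e^{-\frac12u^2\eta^2}\right];
\]
the general case $H\in{\cal A}$ then follows by a routine density argument, using again the uniform bound $|e^{iuS_{n,k_n}}|\le1$, and this is exactly the convergence to the kernel ${\cal N}(0,\eta^2)$. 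The main device is the complex-valued product
\[
T_{n,k}=\prod_{j=1}^{k}\bigl(1+iu\,Y_{n,j}\bigr),
\]
which, for each fixed $n$, is an $({\cal F}_{n,k})_k$-martingale with $T_{n,0}=1$. Consequently, if $H\in{\cal F}_{m,h}$ and $n\ge m$ (so $H\in{\cal F}_{n,h}$ by nesting), the tower property gives $E[\mathbf 1_H\,T_{n,k_n}]=E[\mathbf 1_H\,T_{n,h}]$, and since $|T_{n,h}|^2=\prod_{j\le h}(1+u^2Y_{n,j}^2)\to1$ by Condition~1), with the $L^1$-domination supplied by Condition~2), we obtain $E[\mathbf 1_H\,T_{n,k_n}]\to P(H)$.

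The second ingredient is the exact identity, where $V_{n,k}^2=\sum_{j=1}^{k}Y_{n,j}^2$,
\[
e^{iuS_{n,k_n}}=T_{n,k_n}\,W_{n},\qquad W_{n}=e^{-\frac12u^2V_{n,k_n}^2}\prod_{j=1}^{k_n}\frac{e^{\,iuY_{n,j}+\frac12u^2Y_{n,j}^2}}{1+iu\,Y_{n,j}}.
\]
A direct computation gives $|W_n|=\prod_{j}(1+u^2Y_{n,j}^2)^{-1/2}=|T_{n,k_n}|^{-1}\le1$, so $W_n$ is automatically bounded. Moreover, expanding the logarithm of each factor, $\log\bigl[(1+iuy)^{-1}e^{iuy+\frac12u^2y^2}\bigr]=\tfrac{i}{3}u^3y^3+O(y^4)$, whence the modulus of $\log\prod_j(\cdots)$ is at most $C|u|^3\,(\max_{j}|Y_{n,j}|)\,V_{n,k_n}^2$, which tends to $0$ in probability by Conditions~1) and 3). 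Together with $V_{n,k_n}^2\to\eta^2$ (Condition~3)), this yields $W_n\to e^{-\frac12u^2\eta^2}$ in probability, hence in $L^1$ since $|W_n|\le1$.

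It remains to combine the two ingredients, namely to pass from $E[\mathbf 1_H\,T_{n,k_n}]\to P(H)$ and $W_n\to e^{-\frac12u^2\eta^2}$ to $E[\mathbf 1_H\,T_{n,k_n}W_n]\to E[\mathbf 1_H\,e^{-\frac12u^2\eta^2}]$. \emph{This is the delicate point of the whole argument}: $W_n$ converges only in probability to a \emph{random} limit and is multiplied by the unbounded martingale factor $T_{n,k_n}$, so neither bounded convergence nor a naive factorization applies. I would resolve it by truncating at the level of the quadratic variation: for a large constant $L$ set $\tau_n=\min\{k:V_{n,k}^2>L\}\wedge k_n$. Then $|T_{n,\tau_n}|^2\le\exp\bigl(u^2(L+\max_{j}Y_{n,j}^2)\bigr)$, and Condition~2) makes $\{T_{n,\tau_n}\}_n$ uniformly integrable; since $V_{n,k_n}^2\to\eta^2<\infty$ a.s., choosing $L$ large makes $\sup_nP(\tau_n<k_n)$ arbitrarily small, so the stopped and unstopped quantities differ negligibly. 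On the truncated scale, $T$ is uniformly integrable and $W_n$ converges in $L^1$, which permits replacing $W_n$ by its limit; the remaining obstruction, that this limit is a \emph{random} $\eta^2$ which cannot simply be pulled through the martingale expectation, is the crux and would be handled by approximating $\eta^2$ from below by ${\cal F}_{m,h}$-measurable random variables and localizing $H$ at a fixed finite level (this is precisely where the nesting hypothesis and the full strength of Conditions~1)--3) enter), after which $E[\mathbf 1_H\,T_{n,\tau_n}]\to P(H)$ closes the estimate upon letting $L\to\infty$. Finally, the degenerate case $\eta^2=0$ is treated separately and corresponds to $S_{n,k_n}\to0$ in probability, i.e. stable convergence to the Dirac kernel $\epsilon_0$.
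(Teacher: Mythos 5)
This statement is imported by the paper from \cite{hall-1980} explicitly \emph{without} proof (``We next state two results without proofs''), so there is no internal argument to compare yours with; your attempt has to stand on its own as a reconstruction of Hall and Heyde's proof of their Theorem 3.2. Your skeleton is indeed theirs: the product martingale $T_{n,k}=\prod_{j\le k}(1+iu\,Y_{n,j})$, the exact factorization $e^{iuS_{n,k_n}}=T_{n,k_n}W_n$ with $|W_n|=|T_{n,k_n}|^{-1}\le 1$ and $W_n\to e^{-u^2\eta^2/2}$ in probability, and a truncation at the level of the quadratic variation. But there are two genuine gaps. The first is that your uniform-integrability claims do not follow from the bounds you state. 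For your first ingredient you need $(T_{n,h})_n$ to be uniformly integrable for each fixed $h$; the bound $|T_{n,h}|^2=\prod_{j\le h}(1+u^2Y_{n,j}^2)\le(1+u^2M_n^2)^h$, with $M_n=\max_j|Y_{n,j}|$, is controlled by Condition 2) only when $h=1$, while for $h\ge 2$ it would require moments of $M_n$ of order larger than $2$, which are not assumed. Likewise, at the truncation step, the bound $|T_{n,\tau_n}|^2\le\exp\bigl(u^2(L+M_n^2)\bigr)$ gives nothing: boundedness of $E[M_n^2]$ does not control $E[\exp(u^2M_n^2)]$ (take $M_n^2=n$ with probability $1/n$ and $0$ otherwise). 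The repair, which is how the argument is actually run in \cite{hall-1080-placeholder} \cite{hall-1980}, is to keep the overshooting factor linear: on $\{\tau_n\ge k\}$ one has $V_{n,k-1}^2\le L$, hence $|T_{n,\tau_n\wedge k}|^2\le e^{u^2L}(1+u^2M_n^2)$, whose expectation Condition 2) \emph{does} bound; this gives $L^2$-boundedness, hence uniform integrability, of all the stopped products. The structural consequence is that the stopping must be performed at the outset, so that \emph{both} of your ingredients are established for the stopped array; as written, your first ingredient (unstopped, fixed $h\ge2$) is unjustified. (Also, Condition 3) gives $V_{n,k_n}^2\to\eta^2$ only in probability, not a.s.\ as you write, though $\limsup_n P(\tau_n<k_n)\le P(\eta^2\ge L/2)$ still makes that step work.)

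The second and more serious gap is the one you flag yourself: the passage from $E[\mathbf{1}_H T_{n,k_n}]\to P(H)$ and $W_n\to e^{-u^2\eta^2/2}$ to $E[\mathbf{1}_H T_{n,k_n}W_n]\to E[\mathbf{1}_H e^{-u^2\eta^2/2}]$ is only announced (``would be handled by approximating $\eta^2$\dots''), not carried out, and it is precisely where the theorem lives: the limit kernel is random, so no bounded-convergence or factorization shortcut exists. A complete argument must approximate the bounded weight $\mathbf{1}_He^{-u^2\eta^2/2}$ (measurable, up to null sets, for $\bigvee_n\mathcal{F}_{n,k_n}$) in $L^1$ by $\mathcal{F}_{m,k_m}$-measurable variables $Z_m$; use nesting and the martingale property of the stopped product to write $E[T_nZ_m]=E\bigl[E[T_n\,|\,\mathcal{F}_{n,k_m}]\,Z_m\bigr]$, where the inner conditional expectation is the product up to time $k_m$ and tends to $1$ in probability; convert this into convergence of expectations using the uniform integrability obtained above; and control the resulting error terms uniformly in $n$ before letting $m\to\infty$ and $L\to\infty$ in the right order. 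None of this is routine bookkeeping --- it is exactly the mechanism that upgrades distributional convergence to \emph{stable} convergence --- so what you have is an accurate and well-organized plan of Hall--Heyde's proof, with the correct identification of the hard point, but not yet a proof. (The remark about treating $\eta^2=0$ separately is unnecessary: the characteristic-function formulation $\varphi(u)=E[\exp(-\eta^2u^2/2)]$ handles the degenerate kernel automatically.)
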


\begin{prop}\label{blocco} (Lemma 1 in \cite{ber-cri-pra-rig-2011})\\
Suppose that $C_n$ and $D_n$ are $S$-valued random variables, that $M$
and $N$ are kernels on $S$, and that ${\mathcal G}=({\mathcal G}_n)_n$ 
is an (increasing) filtration satisfying for all $n$
$$
\sigma(C_n)\underline\subset{\cal G}_n\quad\hbox{and }\quad 
\sigma(D_n)\underline\subset{\cal G}_\infty=
\sigma\left({\textstyle\bigcup_n}{\cal G}_n\right) 
$$

\noindent If $C_n$ stably converges to $M$ and $D_n$ converges to $N$
stably in the strong sense, with respect to $\cal G$, then
$$
(C_n, D_n)\stackrel{stably}\longrightarrow M \times N.  
$$ 
(Here, $M\times N$ is the kernel on $S\times S$ such that $(M
\times N )(\omega) = M(\omega) \times N(\omega)$ for all $\omega$.)
\end{prop}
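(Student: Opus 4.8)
The plan is to verify the defining property of stable convergence directly for the pair $(C_n,D_n)$: I would show that
\[
E\big[\,U\,G(C_n,D_n)\,\big]\longrightarrow E\big[\,U\,(M\times N)(G)\,\big]
\]
for every bounded continuous $G$ on $S\times S$ and every bounded $\mathcal A$-measurable $U$. Taking $U=I_H$ and using linearity, this is equivalent to the weak convergence of $P((C_n,D_n)\in\cdot\mid H)$ to $E[(M\times N)(\cdot)\mid H]$ for all $H$ with $P(H)>0$, i.e.\ to the asserted stable convergence. First I would reduce to product test functions $G=f\otimes h$, with $f,h$ bounded continuous on $S$. Since $C_n$ converges stably and $D_n$ converges stably in the strong sense (which implies ordinary stable convergence of $D_n$, see \cite{cri-let-pra-2007}), for $U\ge 0$ with $E[U]=1$ each marginal of the probability measures $\mu_n(B)=E[\,U\,I_{\{(C_n,D_n)\in B\}}\,]$ converges weakly, hence is tight by Prokhorov's theorem; therefore $\{\mu_n\}$ is tight on $S\times S$. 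As products of bounded continuous functions form a measure-determining class on $S\times S$, identifying every weak subsequential limit of $\{\mu_n\}$ on the functions $f\otimes h$ then suffices.

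With $G=f\otimes h$ fixed, the core of the argument is the decomposition
\[
E[\,U f(C_n) h(D_n)\,]=A_n+B_n+Q_n,
\]
obtained by inserting the conditional expectation $E[h(D_n)\mid\mathcal G_n]$ and its limit $N(h)$:
\[
A_n=E\big[\,U f(C_n)\,(h(D_n)-E[h(D_n)\mid\mathcal G_n])\,\big],
\]
\[
B_n=E\big[\,U f(C_n)\,(E[h(D_n)\mid\mathcal G_n]-N(h))\,\big],\qquad
Q_n=E\big[\,U f(C_n)\,N(h)\,\big].
\]
For $Q_n$ I would apply the stable convergence of $C_n$ to $M$ with the bounded $\mathcal A$-measurable test variable $U\,N(h)$ (note $N(h)$ is $\mathcal G_\infty$-measurable, being the in-probability limit of the $\mathcal G_n$-measurable variables $E[h(D_n)\mid\mathcal G_n]$), obtaining $Q_n\to E[\,U\,N(h)\,M(f)\,]=E[\,U\,(M\times N)(f\otimes h)\,]$, the desired limit. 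For $B_n$ I would use the strong stable convergence of $D_n$: the factor $E[h(D_n)\mid\mathcal G_n]-N(h)$ tends to $0$ in probability and is uniformly bounded, whereas $U f(C_n)$ is uniformly bounded, so $B_n\to 0$ by dominated convergence.

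The term $A_n$ is where the adaptedness hypothesis $\sigma(C_n)\subseteq\mathcal G_n$ enters. Since $f(C_n)\,(h(D_n)-E[h(D_n)\mid\mathcal G_n])$ is $\mathcal G_\infty$-measurable, I may replace $U$ by $E[U\mid\mathcal G_\infty]$ without altering $A_n$, and so assume $U$ is $\mathcal G_\infty$-measurable. Splitting $U=E[U\mid\mathcal G_n]+(U-E[U\mid\mathcal G_n])$, the first piece contributes exactly zero: the variable $E[U\mid\mathcal G_n]\,f(C_n)$ is $\mathcal G_n$-measurable, so conditioning on $\mathcal G_n$ and the tower property annihilate the centered factor $h(D_n)-E[h(D_n)\mid\mathcal G_n]$. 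The second piece is bounded in modulus by $2\|f\|_\infty\|h\|_\infty\,E[\,|U-E[U\mid\mathcal G_n]|\,]$, which tends to $0$ by L\'evy's martingale convergence theorem since $\mathcal G_n\uparrow\mathcal G_\infty$ and $U$ is bounded and $\mathcal G_\infty$-measurable; hence $A_n\to 0$. Combining the three limits gives $E[U f(C_n) h(D_n)]\to E[U(M\times N)(f\otimes h)]$, which together with the reduction of the first paragraph establishes the stable convergence of $(C_n,D_n)$ to $M\times N$. I expect the main obstacle to be the reduction to product test functions (tightness together with the determining-class identification of the subsequential limits); by contrast, the analytic core becomes routine once one inserts the intermediate conditional expectation $E[h(D_n)\mid\mathcal G_n]$ and exploits both the $\mathcal G_n$-measurability of $f(C_n)$ and L\'evy's theorem.
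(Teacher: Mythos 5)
Your proof is correct and complete: the three\-/term decomposition $A_n+B_n+Q_n$ obtained by inserting $E[h(D_n)\mid\mathcal G_n]$ is sound, each hypothesis is used exactly where needed ($\sigma(C_n)\subseteq\mathcal G_n$ to annihilate the first piece of $A_n$ via the tower property, $\sigma(D_n)\subseteq\mathcal G_\infty$ to justify replacing $U$ by $E[U\mid\mathcal G_\infty]$ before applying L\'evy's theorem, strong stable convergence for $B_n$, stable convergence of $C_n$ tested against the bounded variable $U\,N(h)$ for $Q_n$), and the reduction to product test functions via tightness of the marginals and the measure-determining property of $\{f\otimes h\}$ on the Polish product $S\times S$ is standard and correctly executed. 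Be aware, though, that the paper itself contains no proof to compare against: Proposition~\ref{blocco} is explicitly stated ``without proof'' and imported as Lemma~1 of \cite{ber-cri-pra-rig-2011}; your argument essentially reconstructs the proof given in that reference, which follows the same skeleton (reduction to $f\otimes h$, exploitation of the $\mathcal G_n$-measurability of $f(C_n)$ together with $E[h(D_n)\mid\mathcal G_n]\stackrel{P}\to N(h)$, and a martingale-convergence extension from $\mathcal G_n$- or $\mathcal G_\infty$-measurable test variables to general bounded $U$). Two cosmetic remarks: in the tightness step you can bypass the implication ``strong stable $\Rightarrow$ stable'' from \cite{cri-let-pra-2007} entirely, since $E[h(D_n)]=E\left[E[h(D_n)\mid\mathcal G_n]\right]\to E[N(h)]$ already gives convergence in distribution of $D_n$, hence tightness of its laws; and in the treatment of $Q_n$ the $\mathcal G_\infty$-measurability of $N(h)$ is not needed, as stable convergence admits any bounded $\mathcal A$-measurable multiplier.
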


\end{document}